\def\CTeXPreproc{Created by ctex v0.2.12, don't edit!}%Corrected on Dec. 08, 2007; by samy
\newtheorem{thm}{Theorem}
\newtheorem{cor}{Corollary}
\newtheorem{lem}{Lemma}
\newtheorem{prop}{Proposition}
\newtheorem{claim}{Claim}
\newtheorem{conj}{Conjecture}
\newtheorem{prob}{Problem}
\theoremstyle{definition}
\newtheorem{defn}{Definition}
\newtheorem{example}{Example}
\newenvironment{rem}{%
\bigskip
\noindent \textsl{{\sl Remark. }}}{\bigskip}
\newenvironment{rems}{%
\bigskip
\noindent \textsl{{\sl Remarks. }}}{\bigskip}
\newenvironment{pf}[1][]{%
 \vskip 1mm
 \noindent
 \ifthenelse{\equal{#1}{}}%
  {{\slshape Proof. }}%
  {{\slshape #1.} }%
 }%
{\qed\medskip}
\newcounter{alphabet}
\newcounter{tmp}
\newenvironment{Thm}[1][]{\refstepcounter{alphabet}%
\bigskip%
\noindent%
{\bf Theorem \Alph{alphabet}}%
\ifthenelse{\equal{#1}{}}{}{ (#1)}%
{\bf .} \itshape}{\vskip 8pt}
\newcommand{\Ref}[1]{\@ifundefined{r@#1}{}{\setcounter{tmp}{\ref{#1}}\Alph{tmp}}}
\newcommand{\IR}{{\mathbb R}}
\newcommand{\IN}{{\mathbb N}}
\def\be{\begin{equation}}
\def\ee{\end{equation}}
\newcommand{\bee}{\begin{enumerate}}
\newcommand{\eee}{\end{enumerate}}
\newcommand{\blem}{\begin{lem}}
\newcommand{\elem}{\end{lem}}
\newcommand{\bthm}{\begin{thm}}
\newcommand{\ethm}{\end{thm}}
\newcommand{\bcor}{\begin{cor}}
\newcommand{\ecor}{\end{cor}}
\newcommand{\beg}{\begin{example}}
\newcommand{\eeg}{\end{example}}
\newcommand{\begs}{\begin{examples}}
\newcommand{\eegs}{\end{examples}}
\newcommand{\bdefe}{\begin{defn}}
\newcommand{\edefe}{\end{defn}}
\newcommand{\bprob}{\begin{prob}}
\newcommand{\eprob}{\end{prob}}
\newcommand{\bques}{\begin{ques}}
\newcommand{\eques}{\end{ques}}
\newcommand{\bei}{\begin{itemize}}
\newcommand{\eei}{\end{itemize}}
\newcommand{\bde}{\begin{deter}}
\newcommand{\ede}{\end{deter}}
\newcommand{\bca}{\begin{case}}
\newcommand{\eca}{\end{case}}
\newcommand{\bcl}{\begin{claim}}
\newcommand{\ecl}{\end{claim}}
\newcommand{\bcon}{\begin{conj}}
\newcommand{\econ}{\end{conj}}
\newcommand{\bcons}{\begin{conjs}}
\newcommand{\econs}{\end{conjs}}
\newcommand{\bprop}{\begin{propo}}
\newcommand{\eprop}{\end{propo}}
\newcommand{\br}{\begin{rem}}
\newcommand{\er}{\end{rem}}
\newcommand{\brs}{\begin{rems}}
\newcommand{\ers}{\end{rems}}
\newcommand{\bo}{\begin{obser}}
\newcommand{\eo}{\end{obser}}
\newcommand{\bos}{\begin{obsers}}
\newcommand{\eos}{\end{obsers}}
\newcommand{\bpf}{\begin{pf}}
\newcommand{\epf}{\end{pf}}
\newcommand{\ba}{\begin{array}}
\newcommand{\ea}{\end{array}}
\newcommand{\beq}{\begin{eqnarray}}
\newcommand{\beqq}{\begin{eqnarray*}}
\newcommand{\eeq}{\end{eqnarray}}
\newcommand{\eeqq}{\end{eqnarray*}}
\newcommand{\ds}{\displaystyle}
\newcounter{minutes}\setcounter{minutes}{\time}
\newcounter{hours}\setcounter{hours}{\time}
\begin{document}
\title[Compositions of polyharmonic mappings]{Compositions of polyharmonic mappings}

%=========================================================================
\thanks{%$^\dagger$
File:~\jobname .tex,
          printed: \number\day-\number\month-\number\year,
          \thehours.\ifnum\theminutes<10{0}\fi\theminutes}
%=========================================================================

\author[G. Liu]{Gang Liu}
\address{Gang Liu, College of Mathematics and Statistics (Hunan Provincial Key Laboratory of Intelligent Information Processing and Application),
Hengyang Normal University, Hengyang,  Hunan 421008, People's Republic of China.}
\email{liugangmath@sina.cn}

\author[S. Ponnusamy]{Saminathan Ponnusamy
%$^\dagger $
%${}^{~\mathbf{*}}$
}
\address{Saminathan  Ponnusamy,
Indian Statistical Institute (ISI), Chennai Centre, SETS (Society
for Electronic Transactions and Security), MGR Knowledge City, CIT
Campus, Taramani, Chennai 600 113, India.}
\email{samy@isichennai.res.in, samy@iitm.ac.in}

\subjclass[2010]{Primary: 31A05, 31A30}
%; Secondary: 30C20, 30C55}
%,  31B05, 31C05}

\keywords{analytic, anti-analytic and polyharmonic.
%\\
%$%{}^{\mathbf{*}}
%^\dagger$ {\tt This author is on leave from the Department of Mathematics,
%Indian Institute of Technology Madras, Chennai-600 036, India}
}
%\thanks{ }
%\maketitle

%\newfont{\Bbb}{msbm10 scaled\magstephalf}
\begin{abstract}
The paper is devoted to the study of compositions of polyharmonic mappings in simply connected domains.
More precisely, we determine necessary and sufficient conditions of polyharmonic mapping $f$ such that
$f\circ F$ (resp. $F\circ f$) is $l$-harmonic for any analytic function (or harmonic mapping but not analytic, or $q$-harmonic
mapping but not $(q-1)$-harmonic) $F$.
%, where $q$ and $l$ are positive integers.
\end{abstract}

\maketitle \pagestyle{myheadings}
\markboth{G. Liu and S. Ponnusamy}{Compositions of Polyharmonic Mappings}

\section{Introduction and Main Results}
A complex-valued function $f$ is called a harmonic mapping if it satisfies the harmonic equation $\Delta f=0$,
where $\Delta$ denotes  the complex Laplacian operator
$$\Delta=4\frac{\partial^2}{\partial z\partial\overline{z}}=\frac{\partial^2}{\partial x^2}+\frac{\partial^2}{\partial y^2}.
$$
Evidently, every harmonic mapping in a simply connected domain admits the representation $f(z)=A(z)+\overline{B(z)}$. Properties of
harmonic mappings have been investigated extensively (see the monograph of Duren \cite{dur}), especially after the
appearance of the pioneering article of Clunie and Sheil-Small \cite{CS} in 1984. A simple fact is that a harmonic mapping of an
analytic function is harmonic, but an analytic function of a
harmonic mapping is not necessarily harmonic. On the other hand, some elementary facts about
compositions of certain harmonic mappings, which escape the attention of many, were addressed by Reich \cite{rei} in 1987.
For instance the following result was established in \cite{rei}.

\begin{Thm}\label{LP-Th1}
Suppose $f(z)=z+\overline{B(z)}$, where $B(z)$ is analytic and $G(z)=B'(z)$. A necessary and sufficient condition that there locally exists a
non-affine complex harmonic function $g(w)$, such that  $g(f(z))$ is harmonic is that $G(z)$ satisfies
$$(G')^2=\alpha ^2G^4+ 2cG^3+(\overline{\alpha}) ^2G^2
$$
for some complex constant $\alpha$ and some real constant $c$.
\end{Thm}

In addition to Theorem \Ref{LP-Th1}, several important special cases of it were also obtained in \cite{rei} by expressing the analytic
functions $G$ and $B$ in terms of elementary functions.
The present article is motivated by the work of Reich  which provides the local description of all harmonic mappings $f$ such
that $g\circ f$ is harmonic for some non-affine harmonic $g$. Twenty years later,  properties of the composition of
harmonic mappings with harmonic mappings, and the composition of biharmonic mappings with biharmonic mappings
were investigated in \cite{cpw2}. Nevertheless, non-analytic and non-harmonic functions play significant role, eg. biharmonic mappings
in fluid dynamics and elasticity problems (see \cite{hb, khu1, lan}).  However, the investigation of biharmonic mappings in the context of
geometric function theory is a recent one (see \cite{am, amk1, amk2, cpw1, cpw2,   cw}).

We are interested in iterations of the Laplace operator, namely, $p$-harmonic operators
defined inductively by $\Delta^p=\Delta(\Delta^{p-1})$ for $p=2,\ldots$.
A $2p$-times continuously differentiable complex-valued function $f$ in a simply connected domain $D\subseteq\mathbb{C}$ is called
$p$-harmonic in $D$ if $f$ satisfies $p$-harmonic equation $\Delta^pf=0$ in $D$. $f$ is called polyharmonic if it is $p$-harmonic for some $p\in\IN$.
For $p=1$ (resp. $2$), $f$ is  harmonic (resp. biharmonic). Obviously, every $p$-harmonic mapping is $(p+1)$-harmonic.
It is easy to see that, $f$ is $p$-harmonic in a simply connected domain $D$ if and only if (see finite Almansi expansion \cite[Proposition 1.3]{ACL} and \cite[Proposition 1.1]{cpw4})
\be\label{LS1-lem1}
f(z)=\sum_{k=1}^p|z|^{2(k-1)}G_k(z),
\ee
where each $G_k(z)$ is harmonic in $D$.
%, i.e., $\Delta G_k=0$ for $k=1,\ldots,p$.
%Furthermore, the mappings $G_k$ can be expressed as the form
%$$G_k=h_k+\overline{g_k}
%$$
%for each $k\in\{1,\ldots, p\}$, where all $h_k$ and $g_k$ are analytic in $\mathbb{D}$  (see \cite{cpw3, cpw4}).
There is now a long list of articles in the literature on this subject. For recent results on $p$-harmonic mappings, we  refer
to the articles \cite{cpw3, cpw4,  crw1, crw2, crw3, khu2,   lkw, lpw, qw1, qw2}. Another motivation for the study of polyharmonic mappings is from the recent work
of Borichev and  Hedenmalm \cite{BH} on the study of {\it second order elliptic partial differential equations}
$T_{\alpha}(f)=0$
in the unit disk $\mathbb{D}=\{z:\,|z|<1\}$, where $\alpha\in\IR$ and
$$T_\alpha=-\frac{\alpha^2}{4}(1-|z|^2)^{-\alpha-1}
+\frac{\alpha}{2}(1-|z|^2)^{-\alpha-1}(z\frac{\partial}{\partial z}+\overline{z}\frac{\partial}{\partial \overline{z}})
+(1-|z|^2)^{-\alpha}\frac{\partial^2}{\partial z\partial \overline{z}}.
$$
In particular if we take $\alpha=2(p-1)$, then $f$ satisfying $T_{\alpha}(f)=0$ is {\it $p$-harmonic}. Clearly, the choice
$\alpha=0$ gives that $f$ is harmonic.
%Throughout this paper we consider polyharmonic mappings in the unit disk $\mathbb{D}=\{z\in\mathbb{C}~|~|z|<1\}$.
Moreover, the problem of when the composite mappings of $p$-harmonic
mappings with a fixed analytic function are $l$-harmonic was discussed in \cite{lpw}, where $l\in\{1,\ldots,p\}$.

In what follows, the numbers $q,l$ are positive integers unless otherwise stated. $[x]$ denotes the largest integer no
more than $x$, where $x$ is a real number. Recall that $f(z)$ is called an affine mapping if $f(z)=\alpha z+\overline{\beta z}+\gamma$,
where $\alpha$, $\beta$ and $\gamma$ are some constants. Similarly, $f$ is called a harmonic polynomial of degree $n$ if
$f=h+\overline{g}$, where $h$ and $g$ are analytic polynomials such that $n=\max\{{\rm deg\,}h(z),\, {\rm deg\,}g(z)\}$.

In this paper we are mainly concerned with the properties of the composition of polyharmonic mappings.
We completely solve the following problem: \textit{What is the polyharmonic mapping if all its post-compositions by any $q$-harmonic mapping are
$l$-harmonic?} Our main results follow.

\begin{thm}\label{LS1-th1}
Let $f$ be a polyharmonic mapping. Then
\begin{enumerate}
\item[{\rm (a)}] for any analytic $F$, $f\circ F$ is $l$-harmonic if and only if $f$ is harmonic;

\item[{\rm (b)}] for any harmonic $F$ which is not analytic, $f\circ F$ is $l$-harmonic if and only if $f(z)$ is an
affine mapping.
%i.e. $f(z)=\alpha z+\overline{\beta z}+\gamma$, where $\alpha$, $\beta$ and $\gamma$  are constants;

\item[{\rm (c)}] for any $q$-harmonic ($q\geq2$) $F$ which is not $(q-1)$-harmonic, $f\circ F$ is $l$-harmonic
if and only if $f$ is a harmonic polynomial of degree $t$, where $t\leq\min \big \{1,[\frac{l-1}{q-1}]\big \}$.
%$f(z)=\sum_{n=0}^{t}(\alpha_n z^n+\overline{\beta_nz^n})$, where $t=\min\{1,[\frac{l-1}{q-1}]\}$
%and all coefficients of $f$ are constants.
\end{enumerate}
\end{thm}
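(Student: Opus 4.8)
\emph{Plan.} The plan is to convert the analytic condition ``$l$-harmonic'' into a purely combinatorial condition on the local power series, and then to read off everything from the position of a single monomial. Since polyharmonic maps are real-analytic, near any point write $f=\sum_{m,n\ge0}a_{mn}z^{m}\overline z^{\,n}$; as $\Delta^{p}(z^{m}\overline z^{\,n})$ is a nonzero multiple of $z^{m-p}\overline z^{\,n-p}$ when $\min(m,n)\ge p$ and vanishes otherwise, the monomial $z^{m}\overline z^{\,n}$ is $p$-harmonic iff $\min(m,n)\le p-1$. Setting $\mu(g)=\sup\{\min(m,n):\ [z^{m}\overline z^{\,n}]\,g\neq0\}$ (this is exactly the top index in the Almansi decomposition \eqref{LS1-lem1}, read off coefficient-wise), we obtain: \emph{$g$ is $l$-harmonic iff $\mu(g)\le l-1$.} Two facts are used throughout: $\mu(g_{1}+g_{2})\le\max\{\mu(g_{1}),\mu(g_{2})\}$ and $\mu(\overline g)=\mu(g)$.

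The three ``if'' directions are then immediate upper bounds. If $f$ is harmonic, $f=A+\overline B$ with $A,B$ analytic, and for analytic $F$ the map $A\circ F+\overline{B\circ F}$ is harmonic, giving (a). If $f$ is affine, $f(z)=\alpha z+\overline{\beta z}+\gamma$, then $f\circ F=\alpha F+\overline\beta\,\overline F+\gamma$, so $\mu(f\circ F)\le\mu(F)\le q-1$; this is $\le l-1$ precisely when $l\ge q$, i.e. $[\tfrac{l-1}{q-1}]\ge1$, while a constant $f$ always gives $\mu=0$. This proves the ``if'' parts of (b), (c) and explains the shape of the bound $t\le\min\{1,[\tfrac{l-1}{q-1}]\}$.

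For the converses I produce explicit $F$ with $\mu(f\circ F)\ge l$. The cases that do not require creating new monomials are handled by a monomial $F$, where cancellation simply cannot occur. For (a), if $f$ is not harmonic then $a_{m_{0}n_{0}}\neq0$ with $m_{0},n_{0}\ge1$, and $F(z)=z^{l}$ gives $f\circ F=\sum a_{mn}z^{lm}\overline z^{\,ln}$ with pairwise distinct monomials, so $\mu(f\circ F)=l\,\mu(f)\ge l$. For (c), the monomial $F_{0}(z)=z^{\,q-1+l}\,\overline z^{\,q-1}$ is $q$-harmonic but not $(q-1)$-harmonic, and since $(m,n)\mapsto\big((q-1)(m+n)+lm,\ (q-1)(m+n)+ln\big)$ is injective, no cancellation occurs and $\mu(f\circ F_{0})=\max_{(m,n)}\big[(q-1)(m+n)+l\min(m,n)\big]$; if $f$ is not harmonic this is $\ge l$ (take a term with $\min(m_{0},n_{0})\ge1$), and if $f$ is affine but non-constant it equals $q-1$, which is $\ge l$ exactly when $l<q$. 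Thus in part (c) every non-harmonic $f$, and every non-constant affine $f$ when $l<q$, is eliminated.

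The one genuinely delicate case is $f$ \emph{harmonic but not affine}: here every monomial of $f$ lies on an axis, a monomial $F$ only yields $\mu=(q-1)\deg f$ (too small for large $l$), and one must manufacture off-axis monomials. I take $F=|z|^{2(q-1)}\big(z^{l}+\lambda\overline z^{\,l}\big)$ (for (b) drop the prefactor and use $F_{\lambda}=z^{l}+\lambda\overline z^{\,l}$, which is harmonic and non-analytic for $\lambda\neq0$); this is $q$-harmonic and not $(q-1)$-harmonic for every $\lambda$. The structural observation is that every monomial arising from $F^{m}\overline F^{\,n}$ has total degree $(m+n)\big(2(q-1)+l\big)$, so the homogeneous pieces separate according to $m+n$; isolating the piece with $m+n=D$ and using that $f$ is harmonic collapses it to $|z|^{2(q-1)D}\big(a_{D0}(z^{l}+\lambda\overline z^{\,l})^{D}+a_{0D}(\overline z^{\,l}+\overline\lambda z^{l})^{D}\big)$, whose mixed monomial $z^{(q-1)D+l\alpha}\overline z^{\,(q-1)D+l\beta}$ $(\alpha,\beta\ge1,\ \alpha+\beta=D)$ carries the coefficient $\binom{D}{\alpha}\big(a_{D0}\lambda^{\beta}+a_{0D}\overline\lambda^{\,\alpha}\big)$. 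Since $f$ is non-affine there is $D\ge2$ with $a_{D0}\neq0$ or $a_{0D}\neq0$, so this coefficient is a nonzero function of $\lambda$; choosing $\lambda$ off its zero set gives a surviving monomial with index $(q-1)D+l\min(\alpha,\beta)\ge l$, hence $\mu(f\circ F)\ge l$. Together with the previous paragraph this forces $f$ affine (and constant when $l<q$), completing (c); removing the prefactor gives (b). \textbf{The main obstacle} is exactly this non-cancellation: for monomial $F$ it is free, but in the harmonic non-affine case several terms of $f$ feed the extremal monomial, and it is the combination of degree-separation (which reduces the sum to one homogeneous term) with the free parameter $\lambda$ (which keeps the surviving binomial coefficient from vanishing identically) that rules cancellation out.
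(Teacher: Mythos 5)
Your $\mu$-invariant framework is legitimate (polyharmonic mappings are real-analytic, so the coefficientwise criterion ``$g$ is $l$-harmonic iff $\mu(g)\le l-1$'' is a valid restatement of the Almansi expansion \eqref{LS1-lem1}), and parts (a) and (c) do go through: the monomial test function $F_0=z^{\,q-1+l}\overline z^{\,q-1}$ and the family $|z|^{2(q-1)}\bigl(z^l+\lambda\overline z^{\,l}\bigr)$, combined with degree separation and the free parameter $\lambda$, eliminate exactly what you claim. I checked the injectivity of the exponent map, the membership of both test functions in $\mathbb{H}_q\setminus\mathbb{H}_{q-1}$ for every $\lambda$, and that $a_{D0}\lambda^{\beta}+a_{0D}\overline\lambda^{\,\alpha}\not\equiv 0$ whenever $(a_{D0},a_{0D})\neq(0,0)$ and $\alpha,\beta\ge 1$. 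This is also a genuinely different route from the paper's, which works with the Almansi decomposition directly and kills coefficients via Parseval's formula and a Vandermonde determinant built from roots of unity; your monomial bookkeeping replaces both of those devices.

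However, part (b) has a genuine gap: you never eliminate polyharmonic-but-not-harmonic $f$. Your closing remark ``removing the prefactor gives (b)'' breaks down for the first half of the argument, because removing the prefactor from $F_0$ yields $z^l$, which is analytic and therefore not an admissible test function in (b) (there $F$ must be harmonic and \emph{not} analytic); and your delicate-case computation with $F_\lambda=z^l+\lambda\overline z^{\,l}$ explicitly uses the assumption that $f$ is harmonic in order to collapse the degree-$D$ piece to the two terms $a_{D0}$, $a_{0D}$, so it says nothing about, say, $f(z)=|z|^2$. As written, your proposal proves (b) only for harmonic $f$. The fix is one line inside your own framework: first test with $F(z)=\overline z^{\,l}$, which is harmonic and not analytic; then $f\circ F=\sum a_{mn}\overline z^{\,lm}z^{\,ln}$ has pairwise distinct monomials, hence no cancellation, and $\mu(f\circ F)=l\,\mu(f)\ge l$ whenever $f$ is not harmonic. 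This is precisely the paper's opening step in (b), where it uses $F(z)=\overline z^{\,m}$ with $m>l$ to reduce to harmonic $f$ before attacking the affine reduction. With that line inserted, your proof of the theorem is complete.
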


\begin{cor}\label{LS1-cor1}
Let $f$ be an analytic function. Then
\begin{enumerate}
\item[{\rm (a)}] for any harmonic $F$ which is not analytic, $f\circ F$ is $l$-harmonic if and only if $f(z)$ is linear in $z$.
%$f(z)=\alpha z+\gamma$, where $\alpha$  and $\gamma$ are constants;

\item[{\rm (b)}] for any $q$-harmonic ($q\geq2$) $F$ which is  not $(q-1)$-harmonic, $f\circ F$ is $l$-harmonic
if and only if $f(z)$ is an analytic polynomial of degree $t$, where $t\leq\min\big \{1,[\frac{l-1}{q-1}]\big \}$.
%$f(z)=\sum_{n=0}^{t}\alpha_n z^n$, where $t=\min\{1,[\frac{l-1}{q-1}]\}$ and all coefficients of $f$ are constants.
\end{enumerate}
\end{cor}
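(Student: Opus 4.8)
The plan is to deduce this corollary directly from Theorem \ref{LS1-th1} by specializing to the case where $f$ is analytic. The first observation is that an analytic function is in particular harmonic, hence $1$-harmonic and therefore polyharmonic, so Theorem \ref{LS1-th1} applies verbatim to such an $f$. The whole argument then reduces to translating the conclusions ``$f$ is affine'' and ``$f$ is a harmonic polynomial of degree $t$'' into their analytic counterparts, using the elementary fact that the anti-analytic part of an analytic function must be constant.

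For part (a), I would invoke Theorem \ref{LS1-th1}(b): the composition $f\circ F$ is $l$-harmonic for every non-analytic harmonic $F$ if and only if $f$ is affine, say $f(z)=\alpha z+\overline{\beta z}+\gamma$. Applying $\partial/\partial\overline{z}$ (equivalently, comparing the analytic and anti-analytic parts) and using that $f$ is analytic forces $\beta=0$, so $f(z)=\alpha z+\gamma$ is linear in $z$. The converse is immediate, since a linear function is a special affine mapping and hence already satisfies the hypothesis of Theorem \ref{LS1-th1}(b).

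For part (b), I would invoke Theorem \ref{LS1-th1}(c): $f\circ F$ is $l$-harmonic for every $q$-harmonic but not $(q-1)$-harmonic $F$ if and only if $f$ is a harmonic polynomial of degree $t$ with $t\le\min\{1,[\frac{l-1}{q-1}]\}$. Writing $f=h+\overline{g}$ with $h,g$ analytic polynomials and $t=\max\{\deg h,\deg g\}$, the analyticity of $f$ gives $\overline{g'(z)}=\partial f/\partial\overline{z}=0$, so $g$ is constant. Thus $f=h+\mathrm{const}$ is an analytic polynomial, and since $\deg g=0$ its degree equals $\max\{\deg h,0\}=t$, so the same bound on $t$ persists. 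Again the converse follows because an analytic polynomial is a special harmonic polynomial.

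The only point requiring any care---and it is brief---is the degree bookkeeping in part (b): one must check that the degree $t$ of the harmonic polynomial $h+\overline{g}$ coincides with the degree of the resulting analytic polynomial. This is settled at once by the observation that $g$ must be constant, whence $\deg g=0$ and the maximum defining $t$ is attained by $h$. No genuinely new estimate is needed beyond Theorem \ref{LS1-th1}; the corollary is purely a specialization of it to the analytic case.
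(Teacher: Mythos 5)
Your proposal is correct and matches the paper's intent exactly: the paper states this corollary without a separate proof, precisely because it is the immediate specialization of Theorem \ref{LS1-th1}(b) and (c) to analytic $f$, with the anti-analytic part forced to be constant by $\partial f/\partial\overline{z}=0$. Your degree bookkeeping ($g$ constant, so $t=\deg h$) is the only point needing verification, and you handle it correctly.
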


We next partly solve the problem of characterizing all polyharmonic mappings if all its pre-compositions by any $q$-harmonic
mapping are $l$-harmonic.

\begin{thm}\label{LS1-th2}
Let $f$ be a harmonic mapping. Then
\begin{enumerate}
\item[{\rm (a)}] for any harmonic $F$, $F\circ f$ is $l$-harmonic if and only if $f$ is analytic or anti-analytic;

\item[{\rm (b)}] for any $q$-harmonic ($q\geq2$) $F$ which is not $(q-1)$-harmonic, $F\circ f$ is $l$-harmonic if and only
if either $f(z)$ or $\overline{f(z)}$ is an analytic polynomial of degree $t$, where $t\leq[\frac{l-1}{q-1}]$.
%$f(z)=\sum_{n=0}^{t}\alpha_n z^n$ or $f(z)=\overline{\sum_{n=0}^{t}\beta_nz^n}$, where
%$t=[\frac{l-1}{q-1}]$  and all  coefficients of $f$ are constants.
\end{enumerate}
\end{thm}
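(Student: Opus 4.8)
The plan is to fix the decomposition $f=A+\overline{B}$ of the harmonic mapping, with $A,B$ analytic, and to reduce everything to two elementary computations. First, writing $F=F(w,\overline{w})$ and applying the chain rule through $w=f(z)$ gives
\be
\Delta(F\circ f)=4\big[F_{ww}\,A'\overline{B'}+F_{w\overline{w}}\,(|A'|^2+|B'|^2)+F_{\overline{w}\,\overline{w}}\,\overline{A'}B'\big],
\ee
all evaluated at $w=f(z)$. Second, for analytic $U,V$ one has $\Delta(U\overline{V})=4U'\overline{V'}$, hence
\be
\Delta^m(U\overline{V})=4^m\,U^{(m)}\,\overline{V^{(m)}}\qquad(m\ge 1),
\ee
so a product of an analytic and an anti-analytic function is $m$-harmonic exactly when $U^{(m)}\equiv 0$ or $V^{(m)}\equiv 0$. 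Together with the Almansi representation \eqref{LS1-lem1}, these two identities drive the whole argument.

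For the ``if'' directions I argue directly. In (a), if $f$ is analytic (resp. anti-analytic) and $F=P+\overline{Q}$ is harmonic, then $F\circ f=P(f)+\overline{Q(f)}$ is the sum of an analytic and an anti-analytic function, hence harmonic and a fortiori $l$-harmonic. In (b), suppose $f=A$ is an analytic polynomial of degree $t\le[\frac{l-1}{q-1}]$ (the anti-analytic case follows by conjugation). Expanding a $q$-harmonic $F$ via \eqref{LS1-lem1} as $F=\sum_{k=1}^q|w|^{2(k-1)}H_k(w)$ with $H_k$ harmonic, each summand of $F\circ A$ equals $|A|^{2(k-1)}H_k(A)$; splitting $H_k$ into analytic and anti-analytic parts writes this as a sum of two products of an analytic and an anti-analytic function, in each of which one factor is the genuine polynomial $A^{k-1}$ of degree $(k-1)t$. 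By the identity $(2)$ this factor kills $\Delta^m$ once $m>(k-1)t$, so each summand is $((k-1)t+1)$-harmonic; taking the maximum over $k\le q$ shows $F\circ A$ is $((q-1)t+1)$-harmonic, and $t\le[\frac{l-1}{q-1}]$ forces $(q-1)t+1\le l$.

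For the ``only if'' directions I exhibit a single test function in each case. If $f$ is neither analytic nor anti-analytic, i.e. $A'\not\equiv 0$ and $B'\not\equiv 0$, I start from $\Delta(f^n)=4n(n-1)A'\overline{B'}f^{n-2}$ (the case $F(w)=w^n$ of $(1)$) and show by induction that the highest-power-in-$n$ part of $\Delta^m(f^n)$ is $4^m\frac{n!}{(n-2m)!}(A'\overline{B'})^m f^{n-2m}$, the remaining terms carrying strictly lower powers of $n$. Choosing a point $z_0$ at which $A'$, $B'$ and $f$ are all nonzero (possible since each vanishes only on a set with empty interior), the number $\Delta^l(f^n)(z_0)$ is dominated for large $n$ by this leading term and is therefore nonzero, so the harmonic function $F(w)=w^n$ already destroys $l$-harmonicity, settling (a). For (b) the same computation applied to the genuinely $q$-harmonic $F(w)=|w|^{2(q-1)}w^N$, for which $F\circ f=f^{N+q-1}\overline{f}^{\,q-1}$, yields $\Delta^l(F\circ f)\not\equiv 0$ once $N$ is large, ruling out every $f$ that is not analytic or anti-analytic. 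Finally, if $f=A$ is analytic but is not a polynomial of degree $\le[\frac{l-1}{q-1}]$, then $F(w)=|w|^{2(q-1)}$ gives $F\circ f=|A|^{2(q-1)}=A^{q-1}\overline{A^{q-1}}$, whose $l$-th Laplacian $4^l|(A^{q-1})^{(l)}|^2$ vanishes identically only when $A^{q-1}$, hence $A$, is a polynomial with $(q-1)\deg A\le l-1$; the anti-analytic case is symmetric.

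The one delicate point is the non-cancellation claim in the previous paragraph: a priori the lower-order-in-$n$ terms of $\Delta^m(f^n)$ could conspire to annihilate the leading term. The safeguard is to track, through the induction, only the coefficient of the top power of $n$ (which arises from always differentiating the factor $f^{\,n-\cdots}$ rather than the $A,\overline{B}$ factors) and then to separate orders by evaluating at the fixed generic point $z_0$: the distinct powers $f(z_0)^{\,n-2m+j}$ factor out, and the surviving polynomial in $n$ has degree exactly $2m$ with leading coefficient $4^m\big(A'(z_0)\overline{B'(z_0)}\big)^m\neq 0$. Carrying out this bookkeeping carefully—rather than verifying the closed-form order bounds of the ``if'' directions, which is routine once $(1)$ and $(2)$ are in hand—is where the real work lies.
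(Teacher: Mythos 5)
Your proposal is correct, and its sufficiency half (Almansi expansion plus the identity $\Delta^m(U\overline V)=4^mU^{(m)}\overline{V^{(m)}}$) and its degree-pinning step (the test map $F(w)=|w|^{2(q-1)}$, giving $\Delta^l(F\circ f)=4^l|(f^{q-1})^{(l)}|^2$) coincide with the paper's. But your engine for the crucial ``only if'' step — forcing $f=A+\overline B$ to be analytic or anti-analytic — is genuinely different. The paper composes with exponentials: taking $F(w)=e^{mw}$ gives $F\circ f=e^{mA}\,\overline{e^{mB}}$, a single analytic-times-anti-analytic product, so the same identity yields at once that $(e^{mA})^{(l)}\equiv0$ or $(e^{mB})^{(l)}\equiv0$, i.e.\ $e^{mA}$ or $e^{mB}$ is a polynomial of degree at most $l-1$; being zero-free (and of degree $<m$), it must be constant, so $A$ or $B$ is constant — no induction, no asymptotics in $n$, no generic point. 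Moreover, for part (b) the paper never builds a genuinely $q$-harmonic test family: its Proposition 1 proves $\mathbb{H}_{p,q+1}^l\subseteq\mathbb{H}_{p,q}^l$ by the additive trick (if $F_q\in\mathbb{H}_q^*$ and $F_{q+1}\in\mathbb{H}_{q+1}^*$ then $F_q+F_{q+1}\in\mathbb{H}_{q+1}^*$ and $\Delta^l(F_q\circ f)=\Delta^l((F_q+F_{q+1})\circ f)-\Delta^l(F_{q+1}\circ f)$), so the $q$-harmonic hypothesis collapses to the analytic one and part (a) can be quoted inside part (b). Your route — monomials $w^n$ and $|w|^{2(q-1)}w^N$, tracking the top power of $n$ in $\Delta^l(f^n)$ and evaluating at a point where $A'$, $B'$, $f$ are all nonzero — does close: the bookkeeping $\deg_n Q_{m,k}\le k$ with the single top term $4^m n(n-1)\cdots(n-2m+1)(A'\overline{B'})^m f^{n-2m}$ is exactly right, and the generic $z_0$ exists since the three exceptional sets are closed with empty interior. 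What each approach buys: the paper's exponential trick makes your ``delicate point'' (possible cancellation of lower-order terms) vanish entirely, because the composed map stays a rank-one product $U\overline V$ to which the identity applies in one stroke, and its class-lattice proposition recycles part (a) for part (b); your argument is heavier in combinatorial bookkeeping but is self-contained (no lattice of classes $\mathbb{H}_{p,q}^l$, no zero-free-function fact) and uses only polynomial test maps, which is the sort of argument that would survive in settings where exponential test functions are not admissible.
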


\begin{cor}\label{LS1-cor2}
Let $f$ be an analytic function. Then,
for any $q$-harmonic ($q\geq2$) $F$ but not $(q-1)$-harmonic, $F\circ f$ is $l$-harmonic if and only if
$f(z)$  is an analytic polynomial of degree $t$, where $t\leq[\frac{l-1}{q-1}]$.
%$f(z)=\sum_{n=0}^{t}\alpha_n z^n$, where $t=[\frac{l-1}{q-1}]$.
\end{cor}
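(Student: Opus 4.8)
The plan is to deduce this statement directly from Theorem~\ref{LS1-th2}(b), since an analytic function is in particular a harmonic mapping (it is of the form $f(z)=A(z)+\overline{B(z)}$ with $B\equiv 0$). First I would observe that Theorem~\ref{LS1-th2}(b) applies verbatim to our analytic $f$, so that the property ``$F\circ f$ is $l$-harmonic for every $q$-harmonic ($q\ge 2$) but not $(q-1)$-harmonic $F$'' is equivalent to the assertion that either $f(z)$ or $\overline{f(z)}$ is an analytic polynomial of degree $t$ with $t\le[\frac{l-1}{q-1}]$. The whole task then reduces to showing that, under the extra hypothesis that $f$ is analytic, this disjunction collapses to the single statement that $f$ itself is an analytic polynomial of degree $t\le[\frac{l-1}{q-1}]$.

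The only point requiring care is the second alternative. I would argue that if $f$ is analytic and $\overline{f(z)}$ is an analytic polynomial, then $\overline{f}$ is simultaneously analytic and anti-analytic. Writing $f=u+iv$ and comparing the Cauchy--Riemann equations expressing analyticity of $f$ with those expressing analyticity of $\overline{f}$ forces both $u$ and $v$ to be constant, so $f$ is a constant, i.e. an analytic polynomial of degree $0$. Since $l\ge 1$ and $q\ge 2$ give $[\frac{l-1}{q-1}]\ge 0$, the degree bound $0\le[\frac{l-1}{q-1}]$ holds automatically, and this case is subsumed by the first alternative. Hence, for analytic $f$, the two alternatives merge into ``$f$ is an analytic polynomial of degree $t\le[\frac{l-1}{q-1}]$''.

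For the converse direction no new work is needed: if $f$ is an analytic polynomial of degree $t\le[\frac{l-1}{q-1}]$, then $f$ is a harmonic mapping with $f$ itself an analytic polynomial of that degree, so Theorem~\ref{LS1-th2}(b) immediately yields that $F\circ f$ is $l$-harmonic for every admissible $F$. Combining both directions gives the stated equivalence. I do not expect any genuine obstacle here, as the corollary is a specialization of Theorem~\ref{LS1-th2}(b); the one substantive remark is the elementary fact that a function which is both analytic and anti-analytic must be constant, which is exactly what allows the $\overline{f}$-branch to be absorbed into the degree-zero case.
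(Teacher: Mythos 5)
Your proposal is correct and follows essentially the same route as the paper, which states Corollary~\ref{LS1-cor2} as an immediate specialization of Theorem~\ref{LS1-th2}(b) without further proof. Your observation that the $\overline{f(z)}$-alternative forces $f$ to be constant (a function both analytic and anti-analytic on a connected domain is constant) is exactly the small detail needed to collapse the disjunction, and it is handled correctly.
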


\begin{thm}\label{LS1-th3}
Let $f$ be a polyharmonic mapping. Then
\begin{enumerate}
\item[{\rm (a)}] for any harmonic $F$, $F\circ f$ is harmonic (or biharmonic) if and only if $f$ is analytic or anti-analytic;

\item[{\rm (b)}] for any $q$-harmonic ($q\geq2$) $F$ which is not $(q-1)$-harmonic, $F\circ f$ is harmonic if and only if $f(z)$ is identically
constant;

\item[{\rm (c)}] for any $q$-harmonic ($q\geq2$) $F$ which is not $(q-1)$-harmonic, $F\circ f$ is biharmonic if and only if
 either $f(z)$ or $\overline{f(z)}$ is an analytic polynomial of degree $t$, where $t\leq[\frac{1}{q-1}]$.
%$f(z)=\sum_{n=0}^{t}\alpha_n z^n$ or $f(z)=\overline{\sum_{n=0}^{t}\beta_nz^n}$, where $t=[\frac{1}{q-1}]$ and all coefficients of  $f$ are constants.
\end{enumerate}
\end{thm}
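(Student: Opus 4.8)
The plan is to treat the \emph{if} directions by citing Theorem~\ref{LS1-th2} and to extract every \emph{only if} direction from two test--function mechanisms. For the \emph{if} parts, the mapping $f$ (or $\overline f$) named in each conclusion is analytic, hence harmonic, so the asserted $l$--harmonicity of $F\circ f$ ($l=1$ in (a),(b); $l=2$ in (a),(c)) is exactly the \emph{if} half of Theorem~\ref{LS1-th2}(a) for harmonic $F$ and of Theorem~\ref{LS1-th2}(b) for $q$--harmonic $F$; nothing further is needed there, so the substance lies in the converses.

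The first mechanism uses analytic probes. For analytic $P$ the chain rule gives $\Delta(P\circ f)=4\big(P''(f)\,f_zf_{\bar z}+P'(f)\,f_{z\bar z}\big)$, and a second application yields an identity $\Delta^{2}(P\circ f)=\sum_{k=1}^{4}A_k\,P^{(k)}(f)$ whose top coefficient is $A_4=c\,(f_zf_{\bar z})^{2}$ with $c\neq0$. In part (a) the harmonic class already contains all analytic $P$. In parts (b),(c) the admissible class ($q$--harmonic but not $(q-1)$--harmonic) is stable under adding any $(q-1)$--harmonic function, so from a single admissible $F_0$ one deduces that $G\circ f$ is $l$--harmonic for \emph{every} $(q-1)$--harmonic $G$, in particular for every analytic $P$. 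Hence in all cases $\Delta^{l}(P\circ f)\equiv0$ for all analytic $P$; fixing a point and prescribing the values $P^{(k)}(f(z_0))$ forces each $A_k$ to vanish. The top coefficient gives $f_zf_{\bar z}\equiv0$ (and, when $l=1$, also $f_{z\bar z}\equiv0$). Since $f$ is polyharmonic, hence real--analytic, on a connected domain, the zero sets of $f_z$ and of $f_{\bar z}$ cannot both be proper, so $f_z\equiv0$ or $f_{\bar z}\equiv0$; that is, $f$ is analytic or anti--analytic. This already settles part (a).

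It remains to bound the degree in (b),(c); assume $f$ analytic, the other case being symmetric via $\overline f$. The second mechanism tests against $F_0(w)=|w|^{2(q-1)}$, which is $q$--harmonic but not $(q-1)$--harmonic. For analytic $f$ a direct computation gives $\Delta\big(|f|^{2(q-1)}\big)=4(q-1)^2|f|^{2(q-2)}|f'|^{2}$. In part (b) this must vanish identically, and since $f$ is analytic with isolated zeros this forces $f'\equiv0$, i.e. $f$ constant. In part (c) the stability remark also makes $|f|^{2}$ biharmonic (it is $G\circ f$ for the $(q-1)$--harmonic $G(w)=|w|^{2}$ when $q\ge3$, and $F_0\circ f$ when $q=2$); as $\Delta^{2}|f|^{2}=16|f''|^{2}$ for analytic $f$, this yields $f''\equiv0$, so $f$ is affine. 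Substituting $f=\alpha z+\beta$ back into $|f|^{2(q-1)}$ shows $\Delta^{2}\big(|f|^{2(q-1)}\big)$ equals a nonzero multiple of $|\alpha|^{4}|f|^{2(q-3)}$ when $q\ge3$, forcing $\alpha=0$. Thus $\deg f\le[\,1/(q-1)\,]$ in every case, and together with the \emph{if} direction this gives the stated characterizations.

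The main obstacle is verifying the shape of the iterated identity $\Delta^{2}(P\circ f)=\sum_k A_k\,P^{(k)}(f)$, and in particular that the order--$4$ coefficient is a nonzero constant multiple of $(f_zf_{\bar z})^{2}$ with no competing contribution, since only this term delivers the decisive relation $f_zf_{\bar z}\equiv0$. The case $l=1$ is immediate (the top term is already $f_zf_{\bar z}$), but the biharmonic case demands a careful collection of the several terms produced by two rounds of the product and chain rules. The remaining steps—the closed form of $\Delta(|f|^{2(q-1)})$ for analytic $f$ and the leading--order analysis after the affine substitution—are routine.
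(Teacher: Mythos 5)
Your proof is correct, and the verdict on your one flagged worry is positive: two applications of the chain rule do give $\Delta^{2}(P\circ f)=\sum_{k=1}^{4}A_k\,P^{(k)}(f)$ with $A_4=16\,(f_zf_{\bar z})^{2}$ and no competing term (the paper's displayed formula for $\Delta^2(e^{mf})$ in its Proposition 2 confirms this: the coefficient of the highest power of $m$ is exactly $(f_zf_{\bar z})^2$; its first-derivative term is missing there only because $f$ is already assumed biharmonic at that point). That said, your route differs from the paper's in organization rather than in substance. The paper proves Theorem 3 in a few lines by reduction: its Proposition 1 gives the set identities $\mathbb{H}_{p,q}^l=\mathbb{H}_{l,q}^l$ (proved with the trivial probe $F(z)=z$, which forces $f$ itself to be $l$-harmonic) and $\mathbb{H}_{p,q}^l\subseteq\mathbb{H}_{p,0}^l$ (proved by exactly your stability/difference trick), after which (a) follows from Theorem 2(a) and Proposition 2, and (b), (c) follow from Theorem 2(b) with $l=1,2$. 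You re-implement the same three mechanisms directly instead of citing them: your stability remark is Proposition 1(b); your polynomial probes with prescribed jets $P^{(k)}(f(z_0))$ replace the paper's exponential probes $e^{mz}$ with coefficient extraction in powers of $m$ (equivalent linear algebra, and your pointwise prescription is arguably cleaner, and it works for arbitrary polyharmonic $f$ without first reducing to $l$-harmonic $f$); and your probes $|w|^{2(q-1)}$, $|w|^{2}$ together with $\Delta^{l}\bigl(A\overline{B}\bigr)=4^{l}A^{(l)}\overline{B^{(l)}}$ are precisely how the paper proves Theorem 2(b). One simplification you missed: once $f$ is known to be analytic, the paper's argument gives $\partial_z^{2}f^{q-1}\equiv0$ directly, so $f^{q-1}$ is a polynomial of degree at most $1$, which for $q\geq3$ forces $f$ constant at once — no need for your two-step "first $f''\equiv0$ via $|f|^2$, then substitute the affine form back" analysis. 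In short: your version is self-contained in the necessity directions and slightly more general in mechanism; the paper's is shorter because it reuses Proposition 1, Proposition 2 and Theorem 2 as black boxes.
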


The cases where there exist difficulties are formulated as an open problem.

\bcon
Let $f$ be a polyharmonic mapping. Then
\begin{enumerate}
\item[{\rm (a)}] for any harmonic $F$, $F\circ f$ is $l$-harmonic if and only if $f$ is analytic or anti-analytic;

\item[{\rm (b)}] for any $q$-harmonic ($q\geq2$) $F$ which is not $(q-1)$-harmonic, $F\circ f$ is $l$-harmonic if and
only if  either $f(z)$ or $\overline{f(z)}$ is an analytic polynomial of degree $t$, where $t\leq[\frac{l-1}{q-1}]$.
%$f(z)=\sum_{n=0}^{t}\alpha_n z^n$ or $f(z)=\overline{\sum_{n=0}^{t}\beta_nz^n}$, where $t=[\frac{l-1}{q-1}]$ and
%all coefficients of $f$ are constants.
\end{enumerate}
\econ

\section{Some  notations and preliminary results}
%\hspace{0.4cm}
%In this section, we will introduce some notations and prove some propositions which are useful for the proof of theorems.

For simplicity, we introduce the following notations. For $p\in \IN$, we let
$$\mathbb{H}_p=\{f:\, \mbox{$f$ is $p$-harmonic}\},
$$
and $\mathbb{H}_0=\{f:\,\mbox{$f$ is analytic}\}$. Set
$\mathbb{H}_p^*=\mathbb{H}_p\backslash\mathbb{H}_{p-1}$
for $p\geq1$ and $\mathbb{H}_0^*=\mathbb{H}_0$. Obviously,
$$\mathbb{H}_p=\cup_{i=0}^p\mathbb{H}_i^* ~\mbox{ and }~ \mathbb{H}_i^*\cap\mathbb{H}_j^*=\emptyset~(i\neq j).
$$
We observe that if $f$ is $p$-harmonic and is represented by \eqref{LS1-lem1} with $G_p(z)\neq0$ for $p\geq2$, then $f\in\mathbb{H}_p^*$.
%
%\begin{lem} \label{LS1-lem1}
%{\rm(\cite[Proposition 1.1]{cpw4})} A mapping $f$ is $p$-harmonic in $\mathbb{D}$ if and only if $f$ has the following
%representation:  $$f(z)=\sum_{k=1}^p|z|^{2(k-1)}G_k(z),
%$$
%where $G_k(z)$ is harmonic for each $k\in\{1,\ldots,p\}$. Therefore, if $G_p(z)\neq0$ for $p\geq2$, then $f\in\mathbb{H}_p^*$.
%\end{lem}
Finally, we introduce
$$\mathbb{H}_{p,q}^l=\{f:\, \mbox{$f\in\mathbb{H}_p$  such that $F\circ f\in\mathbb{H}_l$ for any $ F\in\mathbb{H}_q^*$}\},
$$
where $p,q,l$ are non-negative integers. Clearly, constant functions belong to $\mathbb{H}_{p,q}^l$.

\begin{prop}\label{LS1-prop1}
We have the following properties.
\begin{enumerate}
\item[{\rm (a)}] $\mathbb{H}_{p,q}^l\subseteq\mathbb{H}_{p+1,q}^l$ and $\mathbb{H}_{p,q}^l\subseteq\mathbb{H}_{p,q}^{l+1}$;

\item[{\rm (b)}] $\mathbb{H}_{p,q}^l\subseteq\mathbb{H}_{p,q-1}^l\subseteq\cdots\subseteq\mathbb{H}_{p,1}^l=\mathbb{H}_{p,0}^l$;

\item[{\rm (c)}] $\mathbb{H}_{p,q}^l=\mathbb{H}_{l,q}^l~(l\leq p)$.
\end{enumerate}
\end{prop}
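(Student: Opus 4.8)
The plan is to reduce all three assertions to two structural facts about the target space $\mathbb{H}_l$: first, that it is a complex-linear space, because $\Delta^l$ is a linear operator; and second, that it is closed under complex conjugation, because $\Delta=\partial_{xx}+\partial_{yy}$ is a real operator, so that $\Delta^l\overline{h}=\overline{\Delta^l h}$. Combined with the elementary inclusion $\mathbb{H}_l\subseteq\mathbb{H}_{l+1}$ noted in the Introduction, the finite Almansi expansion \eqref{LS1-lem1}, and the observation recorded there that a representation \eqref{LS1-lem1} with nonvanishing top coefficient certifies membership in $\mathbb{H}_p^*$, these two facts will drive everything.

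Part (a) I expect to be immediate from the definitions. For the first inclusion, if $f\in\mathbb{H}_{p,q}^l$ then $f\in\mathbb{H}_p\subseteq\mathbb{H}_{p+1}$ while the composition requirement is literally unchanged, so $f\in\mathbb{H}_{p+1,q}^l$. For the second, every $F\in\mathbb{H}_q^*$ gives $F\circ f\in\mathbb{H}_l\subseteq\mathbb{H}_{l+1}$, whence $f\in\mathbb{H}_{p,q}^{l+1}$.

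The heart of the matter is part (b), where the subtlety is that the index families $\mathbb{H}_q^*$ and $\mathbb{H}_{q-1}^*$ are \emph{disjoint}, so the inclusion $\mathbb{H}_{p,q}^l\subseteq\mathbb{H}_{p,q-1}^l$ is not a mere restriction of a quantifier. My approach is to manufacture, for each test map $G\in\mathbb{H}_{q-1}^*$, a linear decomposition $G=F_1-F_2$ with $F_1,F_2\in\mathbb{H}_q^*$. Fixing any nonzero harmonic $H$ (for $q\geq2$ one may take $H\equiv1$) and setting $F_2=|w|^{2(q-1)}H(w)$ and $F_1=G+F_2$, the Almansi expansion \eqref{LS1-lem1} shows both lie in $\mathbb{H}_q^*$: each carries a nonvanishing coefficient at level $q$, while $G$ only reaches level $q-1$. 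Then $f\in\mathbb{H}_{p,q}^l$ forces $F_1\circ f,F_2\circ f\in\mathbb{H}_l$, and linearity gives $G\circ f=F_1\circ f-F_2\circ f\in\mathbb{H}_l$; since $f\in\mathbb{H}_p$ is untouched, $f\in\mathbb{H}_{p,q-1}^l$. For the terminal equality $\mathbb{H}_{p,1}^l=\mathbb{H}_{p,0}^l$, the inclusion $\subseteq$ is the $q=1$ instance of this decomposition (take $H=\overline{w}$, so $F_2=\overline{w}$ and $F_1=G+\overline{w}$ are harmonic but not analytic). For the reverse inclusion I would invoke closure under conjugation: writing a harmonic $G=A+\overline{B}$ with $A,B$ analytic, the hypothesis $f\in\mathbb{H}_{p,0}^l$ yields $A\circ f,B\circ f\in\mathbb{H}_l$, hence $\overline{B\circ f}\in\mathbb{H}_l$, and therefore $G\circ f=A\circ f+\overline{B\circ f}\in\mathbb{H}_l$.

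Finally, part (c) should follow by combining the previous parts with one well-chosen test function. The inclusion $\mathbb{H}_{l,q}^l\subseteq\mathbb{H}_{p,q}^l$ for $l\leq p$ is just iteration of the first inclusion in (a). For the reverse, given $f\in\mathbb{H}_{p,q}^l$, part (b) places $f\in\mathbb{H}_{p,0}^l$; applying the composition requirement to the identity $F(w)=w\in\mathbb{H}_0^*$ yields $f=F\circ f\in\mathbb{H}_l$, and since the composition condition defining membership is unchanged, $f\in\mathbb{H}_{l,q}^l$. I expect the main obstacle to be precisely the decomposition step in part (b): one must confirm via the Almansi expansion that the constructed $F_1,F_2$ genuinely lie in $\mathbb{H}_q^*$ rather than merely in $\mathbb{H}_q$, and this verification is exactly the point where the defining hypothesis "$F$ is $q$-harmonic but not $(q-1)$-harmonic" enters the argument.
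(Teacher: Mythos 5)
Your proposal is correct and follows essentially the same route as the paper: part (b) rests on the identical linearity trick of writing each test function $G\in\mathbb{H}_{q-1}^*$ as a difference of two members of $\mathbb{H}_q^*$ (the paper adds an arbitrary $F\in\mathbb{H}_q^*$ to $G$, while you construct the explicit witness $|w|^{2(q-1)}H(w)$), together with closure of $\mathbb{H}_l$ under sums and conjugation for the equality $\mathbb{H}_{p,1}^l=\mathbb{H}_{p,0}^l$. Part (a) is trivial in both treatments, and part (c) is settled in both by combining (a), (b) and testing against the identity map $F(w)=w\in\mathbb{H}_0^*$.
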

\begin{proof}
(a) The proofs of the inclusions in (a) are trivial.

(b) For the chain of inclusions in (b), we first show that $\mathbb{H}_{p,q+1}^l\subseteq\mathbb{H}_{p,q}^l$ for any $q\in\mathbb{N}$.
%and $\mathbb{H}_{p,1}^l\supseteq\mathbb{H}_{p,0}^l$.
Assume that $f\in\mathbb{H}_{p,q+1}^l$. Then $f\in\mathbb{H}_p$ and
$F_{q+1}\circ f\in\mathbb{H}_l$ for any $F_{q+1}\in\mathbb{H}_{q+1}^*$. Also, for any $F_q\in\mathbb{H}_q^*$, it is easy to see that
$F_{q+1}+F_q\in\mathbb{H}_{q+1}^*$ and
$$\Delta^l(F_q\circ f)=\Delta^l((F_{q+1}+F_q)\circ f)-\Delta^l(F_{q+1}\circ f)=0.
$$
Thus, $f\in\mathbb{H}_{p,q}^l$ and hence, $\mathbb{H}_{p,q+1}^l\subseteq\mathbb{H}_{p,q}^l$.

For the completion of the proof of the inclusions in (b), we only need to show that $\mathbb{H}_{p,1}^l\supseteq\mathbb{H}_{p,0}^l$.  To do
this, we let $F\in\mathbb{H}_1^*=\mathbb{H}_1\backslash\mathbb{H}_{0}$. Then, $F$ is harmonic with the representation
$F=h+\overline{g}$, where $h$ and $g$ are analytic, but $g$ is not a constant function. Next, we assume $f\in\mathbb{H}_{p,0}^l$. Then
$h\circ f\in\mathbb{H}_l$ and $g\circ f\in\mathbb{H}_l$. Therefore, $F\circ f=h\circ f+\overline{g\circ f}\in\mathbb{H}_l$ and thus,
$f\in\mathbb{H}_{p,1}^l$ which shows that $\mathbb{H}_{p,1}^l\supseteq\mathbb{H}_{p,0}^l$.

(c) We claim that $\mathbb{H}_{p,0}^l=\mathbb{H}_{l,0}^l~(l\leq p)$. First, we prove that $\mathbb{H}_{p,0}^l\subseteq\mathbb{H}_{l,0}^l$,
since $\mathbb{H}_{p,0}^l\supseteq\mathbb{H}_{l,0}^l$ by (a). Next, we assume that $f\in\mathbb{H}_{p,0}^l$. Then $f\in\mathbb{H}_p$ and
$F\circ f\in\mathbb{H}_l$ for any  $F\in\mathbb{H}_0$. Choosing $F(z)=z$, we see that $f=F\circ f\in\mathbb{H}_l\subseteq\mathbb{H}_p$.
Thus, $f\in\mathbb{H}_{l,0}^l$ and thus,  $\mathbb{H}_{p,0}^l\subseteq\mathbb{H}_{l,0}^l$.

Next we show that $\mathbb{H}_{p,q}^l=\mathbb{H}_{l,q}^l~(l\leq p)$. Since $\mathbb{H}_{p,q}^l\subseteq\mathbb{H}_{p,0}^l=\mathbb{H}_{l,0}^l$
by (b), it follows that $\mathbb{H}_{p,q}^l\subseteq\mathbb{H}_{l,q}^l$. Thus, $\mathbb{H}_{p,q}^l=\mathbb{H}_{l,q}^l$ because of the inclusion
$\mathbb{H}_{l,q}^l\subseteq\mathbb{H}_{p,q}^l$ by (a).
\end{proof}

\begin{prop}\label{LS1-prop2}
We have
$\mathbb{H}_{2,0}^2=\{f:\, \mbox{$f$ is  either analytic  or  anti-analytic}\}.$
\end{prop}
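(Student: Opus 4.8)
The plan is to prove the two inclusions separately. For the inclusion $\supseteq$, suppose $f$ is analytic or anti-analytic; then $f$ is harmonic, hence biharmonic, so $f\in\mathbb{H}_2$. If $f$ is analytic, then for every analytic $F$ the composite $F\circ f$ is again analytic, hence harmonic, hence biharmonic. If $f=\overline{A}$ with $A$ analytic, then for every analytic $F$ one has $\partial_z(F\circ f)=F'(f)\,\partial_z\overline{A}=0$, so $F\circ f$ is anti-analytic and again biharmonic. Thus $f\in\mathbb{H}_{2,0}^2$, and this direction is routine.

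The substance is the inclusion $\subseteq$. Let $f\in\mathbb{H}_{2,0}^2$, so $f\in\mathbb{H}_2$ and $\Delta^2(F\circ f)\equiv0$ for every analytic $F$. First I would record the chain rule for the Laplacian: since $F$ is holomorphic, $\partial_{\bar z}(F\circ f)=F'(f)f_{\bar z}$, and therefore
\[
\Delta(F\circ f)=4F''(f)\,f_z f_{\bar z}+F'(f)\,\Delta f .
\]
Applying $\Delta$ a second time and keeping track of the order of the derivative of $F$ produced by each differentiation, I would show that the only term in $\Delta^2(F\circ f)$ carrying a fourth derivative of $F$ arises from differentiating the factor $F''(f)$ twice, and equals
\[
16\,F^{(4)}(f)\,f_z^{\,2}f_{\bar z}^{\,2},
\]
while every remaining term carries a factor $F'(f)$, $F''(f)$, or $F'''(f)$.

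Next I would exploit the arbitrariness of $F$ pointwise. Fixing a point $z_0$ and setting $w_0=f(z_0)$, I choose the polynomial $F(w)=(w-w_0)^4/24$, for which $F'(w_0)=F''(w_0)=F'''(w_0)=0$ and $F^{(4)}(w_0)=1$. Evaluating $\Delta^2(F\circ f)=0$ at $z_0$ annihilates every lower-order term and leaves $16\,f_z(z_0)^2 f_{\bar z}(z_0)^2=0$. Since $z_0$ is arbitrary, $f_z f_{\bar z}\equiv0$ on the domain. Finally, since $f$ is biharmonic it is real-analytic (e.g.\ from the Almansi representation \eqref{LS1-lem1}, each $G_k$ being harmonic), so $f_z$ and $f_{\bar z}$ are real-analytic; if neither were identically zero, their zero sets would be nowhere-dense closed sets whose union is the whole connected domain, contradicting the Baire category theorem. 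Hence $f_z\equiv0$ or $f_{\bar z}\equiv0$, that is, $f$ is anti-analytic or analytic.

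The main obstacle I anticipate is the middle step: carrying out the double Laplacian carefully enough to be certain that no other fourth-order contribution in $F$ has been overlooked, and that each surviving term genuinely carries a factor $F'$, $F''$, or $F'''$ evaluated at $f$, so that the localized choice of $F$ kills it at $z_0$. Once the coefficient $16\,f_z^{\,2}f_{\bar z}^{\,2}$ is correctly isolated, the pointwise specialization of $F$ and the real-analytic dichotomy are both short.
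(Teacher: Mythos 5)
Your proof is correct, and it reaches the key identity $f_z f_{\bar z}\equiv 0$ by the same underlying mechanism as the paper --- compute $\Delta^2(F\circ f)$ by the chain rule and isolate the coefficient of the highest derivative of $F$, namely $16 f_z^2 f_{\bar z}^2$ --- but your extraction step is genuinely different. The paper specializes to the exponential family $F_m(z)=e^{mz}$, writes $\Delta^2(e^{mf})=16m^2e^{mf}A_m(z)$ with $A_m$ a quadratic polynomial in $m$ (a formula that already uses $f_{z^2\bar z^2}=0$, i.e.\ $f\in\mathbb{H}_2$), and then eliminates the lower-order coefficients algebraically via the differences $A_2-A_1$ and $A_3-A_2$, leaving $(f_zf_{\bar z})^2=0$. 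You instead keep $F$ general, record only the structural fact that every term of $\Delta^2(F\circ f)$ other than $16F^{(4)}(f)f_z^2f_{\bar z}^2$ carries a factor $F'(f)$, $F''(f)$ or $F'''(f)$ (this claim is correct; the full expansion confirms it), and then annihilate all of those terms at an arbitrary point $z_0$ with the adapted quartic $F(w)=(w-w_0)^4/24$. Your route buys two things: you never need the explicit expansion, only the bookkeeping of which orders of derivatives of $F$ occur, and your extraction does not use biharmonicity of $f$ at all, so the argument would apply verbatim to any sufficiently smooth $f$ satisfying the hypothesis. The paper's version, in exchange, is completely explicit in one displayed formula. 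A further point in your favour: the last implication, $f_zf_{\bar z}\equiv0$ forces $f_z\equiv0$ or $f_{\bar z}\equiv0$ on the whole domain, is simply asserted in the paper, whereas you justify it (real-analyticity of $f$ via the Almansi representation \eqref{LS1-lem1} plus a Baire category argument on the connected domain); this closes a small gap that the paper leaves implicit. Your treatment of the easy inclusion $\supseteq$ matches the paper's.
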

\begin{proof} Set $F_m(z)=e^{mz}~(m\neq0)$. Assume that $f\in\mathbb{H}_{2,0}^2$. Then $f\in\mathbb{H}_2$ and $F_m\circ f(z)\in\mathbb{H}_2$,
which shows that $\Delta^2(e^{mf})=0$. By computation, we have that
$$\Delta(e^{mf})=4me^{mf}( f_{z\overline{z}}+mf_zf_{\overline{z}}) ~\mbox{ and }~
\Delta^2(e^{mf}) = 16m^2e^{mf}A_m(z),
$$
where
$$ A_m(z) =2(f^2_{z\overline{z}}+f_zf_{z\overline{z}^2}
+f_{\overline{z}}f_{z^2\overline{z}})+f_{z^2}f_{\overline{z}^2}+m(f^2_zf_{\overline{z}^2}
+f^2_{\overline{z}}f_{z^2}+4f_zf_{\overline{z}}f_{z\overline{z}})+m^2(f_zf_{\overline{z}})^2.
$$

Since $e^{mf}\neq0$ and $m\neq0$, the equation $\Delta^2(e^{mf})=0$ is equivalent to $A_m(z)=0$. In particular,
$A_2(z)-A_1(z)=0$ and $A_3(z)-A_2(z)=0$. That is,
\[\left\{ \begin{array}{ll}
f^2_zf_{\overline{z}^2}
+f^2_{\overline{z}}f_{z^2}+4f_zf_{\overline{z}}f_{z\overline{z}}+3(f_zf_{\overline{z}})^2=0,\\[2mm]
f^2_zf_{\overline{z}^2} +f^2_{\overline{z}}f_{z^2}+4f_zf_{\overline{z}}f_{z\overline{z}}+5(f_zf_{\overline{z}})^2=0.
\end{array}
\right.
\]
Subtracting the first equation from the second gives $(f_zf_{\overline{z}})^2=0$. Therefore, $f$ is either analytic or anti-analytic.
For any analytic function $F$, $F\circ f$ is then analytic or anti-analytic. Obviously, $F\circ f\in\mathbb{H}_2$ which implies
the desired statement of Proposition \ref{LS1-prop2}.
%$$\mathbb{H}_{2,0}^2=\{f:\, \mbox{ either $f$ is  analytic  or  anti-analytic}\}. $$
\end{proof}

%\begin{prop}
%Let $l$ be a positive integer. Then $$\Delta^l(fg)=4^l\sum_{0\leq i,j\leq l}C^l_{i,j}f_{z^i\overline{z}^j}g_{z^{l-i}\overline{z}^{l-j}},$$ where each $C^l_{i,j}$ is a positive integer.
%\end{prop}

%\begin{proof}
%It is easy to verify that $$\Delta(fg)=4(gf_{z\overline{z}}+f_zg_{\overline{z}}+f_{\overline{z}}g_z+fg_{z\overline{z}}).$$
%Thus, the statement is true for $l=1$. Assume the statement is true for $l=k$, then
%\begin{eqnarray*} & &\Delta^{k+1}(fg)= \Delta(\Delta^k(fg))=\Delta(4^k\sum_{0\leq i,j\leq k}C^k_{i,j}f_{z^i\overline{z}^j}g_{z^{k-i}\overline{z}^{k-j}})\\
%&=&
%4^k\sum_{0\leq i,j\leq k}C^k_{i,j}\Delta(f_{z^i\overline{z}^j}g_{z^{k-i}\overline{z}^{k-j}})
%\\
%&=&4^{k+1}\sum_{0\leq i,j\leq k}C^k_{i,j}(f_{z^{i+1}\overline{z}^{j+1}}g_{z^{k-i}\overline{z}^{k-j}}
%+f_{z^{i+1}\overline{z}^{j}}g_{z^{k-i}\overline{z}^{k+1-j}}
%+f_{z^{i}\overline{z}^{j+1}}g_{z^{k+1-i}\overline{z}^{k-j}}+
%f_{z^{i}\overline{z}^{j}}g_{z^{k+1-i}\overline{z}^{k+1-j}})\\
%&=&4^{k+1}\sum_{0\leq i,j\leq k+1}C^{k+1}_{i,j}f_{z^i\overline{z}^j}g_{z^{k+1-i}\overline{z}^{k+1-j}}.\end{eqnarray*}
%It is easy to see that each $C^{k+1}_{i,j}$ is a positive integer. The proof of Proposition 3 is completed.\end{proof}

\section{The proofs of main Theorems}

\subsection{The proof of Theorem \ref{LS1-th1}}
It suffices to prove the necessary parts of the statements (a) to (c), since the sufficiency parts  are obvious. Let
$$f(z)=\sum_{k=1}^p|z|^{2(k-1)}G_k(z)\neq0,
$$
where each $G_k(z)$ is harmonic.

(a)  Assume that $f\circ F\in\mathbb{H}_l$ for any analytic function $F$. Let $F(z)=z^m~(m\in\mathbb{Z}$, $m>l)$ be given. Then
$$f\circ F(z)=\sum_{k=1}^p|z|^{2m(k-1)}G_k(z^m)\in \mathbb{H}_l.
$$
Obviously, each $G_k(z^m)$ is still harmonic. Set
$t=\max\{k:\, G_k(z)\neq0, ~1\leq k\leq p\}.
$
We find  that
$$\sum_{k=1}^p|z|^{2m(k-1)}G_k(z^m)=\sum_{k=1}^t|z|^{2m(k-1)}G_k(z^m)\in \mathbb{H}^*_{m(t-1)+1}.
$$
Thus, by \eqref{LS1-lem1}, we have that $m(t-1)+1\leq l$ which implies that $t=1$ and thus, $f(z)=G_1(z)$ which is harmonic.

(b)  Assume that $f\circ F\in\mathbb{H}_l$ for any $F\in\mathbb{H}_1^*$. Suppose that $F(z)=\overline{z}^m$ $(m\in\mathbb{Z},\, m>l)$ is given.
Then, since for each $k\in\{1,\ldots,p\}$, $G_k(\overline{z^m})=\overline{G_k(z^m)}$ is harmonic, using the similar analysis of the previous case,
$f(z)$ must be harmonic. So, we may let
$$f(z)=\sum_{n=0}^{\infty}\alpha_nz^n+\overline{\sum_{n=0}^{\infty}\beta_nz^n}.
$$
Again, given
$F(z)=z^m+(c\overline{z})^m~(m\in\mathbb{Z}, ~m>l,\, |c|=1),
$
by a straight calculation, we compute that
\begin{eqnarray}
\sum_{n=0}^{\infty}\alpha_n(F(z))^n
%&= &\sum_{n=0}^{\infty}\alpha_n(z^m+(c\overline{z})^m)^n\\
&=&\sum_{n=0}^{\infty}\alpha_n\left (\sum_{k=0}^nC_n^kz^{mk}(c\overline{z})^{m(n-k)}\right) \nonumber\\
&=&\sum_{i,j\geq0}C_{i+j}^j\alpha_{i+j}z^{mi}(c\overline{z})^{mj} \nonumber\\
&=&\sum_{n=0}^{\infty}\alpha_nz^{mn}
+\sum_{n=0}^{\infty}c^{mn}\alpha_n\overline{z}^{mn}
+\sum_{i,j\geq1}^{\infty}C_{i+j}^j\alpha_{i+j}z^{mi}(c\overline{z})^{mj} \nonumber\\
&=&\sum_{n=0}^{\infty}\alpha_nz^{mn}
+\sum_{n=0}^{\infty}c^{mn}\alpha_n\overline{z}^{mn}+\sum_{j=1}^{\infty}C_{2j}^jc^{mj}\alpha_{2j}|z|^{2jm}\nonumber \\
& &+\sum_{i>j\geq1}^{\infty}C_{i+j}^j\alpha_{i+j}(c^{mj}z^{m(i-j)}+c^{mi}\overline{z}^{m(i-j)})|z|^{2jm} \nonumber\\
&=&\sum_{n=0}^{\infty}\alpha_nz^{mn}
+\sum_{n=0}^{\infty}c^{mn}\alpha_n\overline{z}^{mn}
+\sum_{j=1}^{\infty}B_j(z)|z|^{2jm},\label{LS1-eq2}
\end{eqnarray}
where
$$B_j(z)=C_{2j}^jc^{mj}\alpha_{2j}+\sum_{i>j}^{\infty}C_{i+j}^j\alpha_{i+j}\left (c^{mj}z^{m(i-j)}+c^{mi}\overline{z}^{m(i-j)}\right )
$$
for $j\geq1$. Similarly, we have
\be\label{LS1-eq3}
\overline{\sum_{n=0}^{\infty}\beta_n(F(z))^n} %= \overline{\sum_{n=0}^{\infty}\beta_n(z^m+(c\overline{z})^m)^n}
=\sum_{n=0}^{\infty}\overline{\beta}_n\overline{z}^{mn}
+\sum_{n=0}^{\infty}\overline{c}^{mn}\overline{\beta}_nz^{mn}
+\sum_{j=1}^{\infty}C_j(z)|z|^{2jm},
\ee
where
$$C_j(z)=C_{2j}^j\overline{c}^{mj}\overline{\beta}_{2j}+
\sum_{i>j}^{\infty}C_{i+j}^j\overline{\beta}_{i+j}\left (\overline{c}^{mj}\overline{z}^{m(i-j)}+\overline{c}^{mi}z^{m(i-j)}\right )
$$
for $j\geq1$. Adding the last two expressions, namely, \eqref{LS1-eq2} and \eqref{LS1-eq3}, gives
\begin{eqnarray*}
f\circ F(z)
%&= &\sum_{n=0}^{\infty}\alpha_n(z^m+(c\overline{z})^m)^n+\overline{\sum_{n=0}^{\infty}\beta_n(z^m+(c\overline{z})^m)^n}\\
&=&\sum_{n=0}^{\infty}(\alpha_n+\overline{c}^{mn}\overline{\beta}_n)z^{mn}
+\sum_{n=0}^{\infty}(c^{mn}\alpha_n+\overline{\beta}_n)\overline{z}^{mn}
+\sum_{j=1}^{\infty}D_j(z)|z|^{2jm},
\end{eqnarray*}
where  $D_j(z)=B_j(z)+C_j(z)$ for  $j\geq1$, and
\begin{eqnarray*}
D_j(z)&=&C_{2j}^j\left (c^{mj}\alpha_{2j}+\overline{c}^{mj}\overline{\beta}_{2j}\right )\\
& &+\sum_{i>j}^{\infty}C_{i+j}^j\Big( \left  (c^{mj}\alpha_{i+j}+
\overline{c}^{mi}\overline{\beta}_{i+j}\right )z^{m(i-j)}+\left (c^{mi}\alpha_{i+j}+\overline{c}^{mj}\overline{\beta}_{i+j}\right )\overline{z}^{m(i-j)}\Big).
\end{eqnarray*}
Clearly, each $D_j(z)$ is harmonic. Since $f\circ F(z)\in \mathbb{H}_l$ and $m>l$, by \eqref{LS1-lem1},
we have $D_j(z)\equiv0$ for each $j\geq1$. It can be deduced from Parseval's formula that
$$c^{mj}\alpha_{2j}+\overline{c}^{mj}\overline{\beta}_{2j}=c^{mj}\alpha_{i+j}+
\overline{c}^{mi}\overline{\beta}_{i+j}=c^{mi}\alpha_{i+j}+\overline{c}^{mj}\overline{\beta}_{i+j}\equiv0
$$
for each $i>j\geq1$ and every $c$ with $|c|=1$. It follows that
$$\alpha_{2j}=\beta_{2j}=\alpha_{i+j}=\beta_{i+j}=0
$$
for all $i>j\geq1$. Thus, $\alpha_n=\beta_n=0$ for $n\geq2$ which yields that
$$f(z)=\alpha_0+\alpha_1z+\overline{\beta_0+\beta_1z}.
$$

(c) Assume that $f\circ F\in \mathbb{H}_l$ for any $F\in\mathbb{H}_q^*\, (q\geq2)$.

We first claim that $f$ is a harmonic polynomial and then we show that $f$ is either a constant function or an affine mapping.
For this, we begin to consider the representation \eqref{LS1-lem1} with
$$G_k(z)=h_k(z)+\overline{g_k(z)}~(1\leq k\leq p),
$$
where $h_k(z)=\sum_{n=0}^{\infty}\alpha_{k,n}z^n$ and $g_k(z)=\sum_{n=0}^{\infty}\beta_{k,n}z^n$. Choosing
$$F(z)=c|z|^{2(q-1)}~(|c|=1),
$$
we find from \eqref{LS1-lem1} that
\begin{eqnarray*}
f\circ F(z)&=&\sum_{k=1}^p|z|^{4(k-1)(q-1)}\left (\sum_{n=0}^{\infty}c^n\alpha_{k,n}|z|^{2n(q-1)}
+\overline{\sum_{n=0}^{\infty}c^n\beta_{k,n}|z|^{2n(q-1)}}\right )\\
&=& \sum_{k=1}^p\left (\sum_{n=0}^{\infty}(c^n\alpha_{k,n}+\overline{c}^n\overline{\beta}_{k,n})|z|^{2(n+2(k-1))(q-1)}\right )\\
&=& \sum_{k=1}^p\left (\sum_{n=2(k-1)}^{\infty}(c^{n-2(k-1)}\alpha_{k,n-2(k-1)}+\overline{c}^{n-2(k-1)}\overline{\beta}_{k,n-2(k-1)})|z|^{2n(q-1)}\right )\\
%&= &\sum_{n=0}^{\infty}\Big(\sum_{k=1}^p(c^n\alpha_{k,n}
%+\overline{c}^n\overline{\beta_{k,n}})|z|^{2(n+2(k-1))(q-1)}\Big)\\
%&=&
%\sum_{n=0}^{\infty}(c^n\alpha_{1,n}+\overline{c}^n\overline{\beta_{1,n}})|z|^{2n(q-1)}
%+\sum_{n=0}^{\infty}(c^n\alpha_{2,n}+\overline{c}^n\overline{\beta_{2,n}})|z|^{2(n+2)(q-1)}
%+\cdots+\sum_{n=0}^{\infty}(c^n\alpha_{p,n}+\overline{c}^n\overline{\beta_{p,n}})|z|^{2(n+2(p-1))(q-1)}\\
%&=&
%\sum_{n=0}^{\infty}(c^n\alpha_{1,n}+\overline{c}^n\overline{\beta_{1,n}})|z|^{2n(q-1)}
%+\sum_{n=0}^{\infty}(c^n\alpha_{2,n}+\overline{c}^n\overline{\beta_{2,n}})|z|^{2(n+2)(q-1)}
%+\cdots\\& &+\sum_{n=2(p-1)}^{\infty}(c^{n-2(p-1)}\alpha_{p,n-2(p-1)}+\overline{c}^{n-2(p-1)}\overline{\beta_{p,n-2(p-1)}})|z|^{2n(q-1)}\\
&=&\sum_{n=0}^{\infty}A_n|z|^{2n(q-1)},
\end{eqnarray*}
where
\[A_n= \left\{
\begin{array}{ll}
\ds \sum_{k=0}^{[\frac{n}{2}]}(c^{n-2k}\alpha_{k+1,n-2k}+\overline{c}^{n-2k}\overline{\beta}_{k+1,n-2k}) &  \mbox {if $n<2(p-1)$},\\
\ds \sum_{k=0}^{p-1}(c^{n-2k}\alpha_{k+1,n-2k}+\overline{c}^{n-2k}\overline{\beta}_{k+1,n-2k}) &  \mbox {if $n\geq2(p-1)$}.
\end{array}
\right.
\]
Since $f\circ F\in\mathbb{H}_l$, by \eqref{LS1-lem1}, we have  $A_n=0$ for  $n>[\frac{l-1}{q-1}]$.

Next we deduce that all analytic functions $h_k(z)$ and $g_k(z)$ should be polynomials. To do this, we fix $n\in\IN$ such that
$$n\geq\max\Big \{\Big [\frac{l-1}{q-1}\Big], 2(p-1)\Big  \}+1.
$$
Let $c=w^i\, (i=0,1,\ldots,2p-1)$ in $A_n=0$, where $w$ is a primitive $4pn$-th root of unity. Then we have the following equations
$$\sum_{k=0}^{p-1}\Big((w^i)^{n-2k}\alpha_{k+1,n-2k} +(\overline{w}^i)^{n-2k}\overline{\beta}_{k+1,n-2k}\Big)=0, ~~i=0,1,\ldots,2p-1.
$$
It is easy to see that the coefficient determinant of the above equations is a  Vandermonde determinant. As $w$ is a primitive
$4pn$-th root of unity, it is clear that $w^i\neq w^j$ ($i\neq j$) and $w^i\neq \overline{w}^j$  for $i,j\in\{0,1,\ldots,n(2p-1)\}$.
Thus, this  Vandermonde determinant does not vanish. It follows that
$$\alpha_{k+1,n-2k}=\overline{\beta}_{k+1,n-2k}=0 ~ \mbox{for $k=0,1,\ldots,p-1$}.
$$
Therefore, all $h_k(z)$ and $g_k(z)~(1\leq k\leq p)$ are polynomials. If $p=1$, then it is clear that $f$ is a harmonic polynomial.
For $p\geq2$, we will prove that $f$ is also a harmonic polynomial.

Without loss of generality, we may let $h_k(z)=\sum_{n=0}^d\alpha_{k,n}z^n$ and $g_k(z)=\sum_{n=0}^d\beta_{k,n}z^n$ $(d\geq2, 1\leq k\leq p)$.
Again, we choose
$$F(z)=z^{(m+1)(q-1)}\overline{z}^{q-1}=|z|^{2(q-1)}z^{m(q-1)}\in\mathbb{H}_q^*~~(m\in\mathbb{Z},\, m>l+d).
$$
By computation, we have
\begin{eqnarray*}
& &f\circ F(z)\\&=& \sum_{k=1}^p\left |\,|z|^{2(q-1)}z^{m(q-1)}\right |^{2(k-1)}\left (\sum_{n=0}^d\alpha_{k,n}(|z|^{2(q-1)}z^{m(q-1)})^n
+\overline{\sum_{n=0}^d\beta_{k,n}(|z|^{2(q-1)}z^{m(q-1)})^n}\right )\\
&=&\sum_{k=1}^p\sum_{n=0}^{d}\Big(\alpha_{k,n}z^{nm(q-1)}
+\overline{\beta}_{k,n}\overline{z}^{nm(q-1)}\Big)|z|^{2(t_k+n)(q-1)}\\
&=&\sum_{k=1}^p\sum_{n=t_k}^{t_k+d}\Big(\alpha_{k,n-t_k}z^{(n-t_k)m(q-1)}
+\overline{\beta}_{k,n-t_k}\overline{z}^{(n-t_k)m(q-1)}\Big)|z|^{2n(q-1)}\\
%&=&\sum_{n=0}^{d}(\alpha_{1,n}z^{n(m-1)(q-1)}
%+\overline{\beta_{1,n}}\overline{z}^{n(m-1)(q-1)})|z|^{2n(q-1)}+\cdots\\
%& &+\sum_{n=(m+1)(p-1)}^{(m+1)(p-1)+d}\big(\alpha_{p,n-(m+1)(p-1)}z^{(n-(m+1)(p-1))(m-1)(q-1)}\\
%& &~~~~~~~~~~~~~~~~~~~~+\overline{\beta_{p,n-(m+1)(p-1)}}\overline{z}^{(n-(m+1)(p-1))(m-1)(q-1)}\big)|z|^{2n(q-1)}\\
&=&\sum_{n=0}^{(m+2)(p-1)+d}E_n(z)|z|^{2n(q-1)},
\end{eqnarray*}
where $t_k=(m+2)(k-1)$ for $k\in\{1,\ldots,p\}$ and
\[E_n(z)= \left\{
\begin{array}{ll}
\alpha_{k,t}z^{tm(q-1)}+\overline{\beta_{k,t}}\overline{z}^{tm(q-1)} & \mbox{ for $n=(m+2)(k-1)+t$ $(t\in\{0,1,\ldots,d\})$},\\
0 & \mbox{ otherwise},
\end{array}
\right.
\]
where $k\in\{1,\ldots,p\}$. Obviously, all $E_n(z)$'s are harmonic polynomials. Since $f\circ F(z)\in\mathbb{H}_l$ and $(m+2)(k-1)>l$
for $k\in\{2,\ldots,p\}$, by \eqref{LS1-lem1}, we have $E_n(z)\equiv0$ for $n\geq m+2$. Again, by Parseval's formula, it follows that
$$\alpha_{k,t}=\beta_{k,t}=0
$$
for $k\in\{2,\ldots,p\}$ and $t\in\{0,\ldots,d\}$. Therefore, $f(z)$ is a harmonic polynomial and thus, for simplicity, we may write it as
$$f(z)=\sum_{n=0}^{d}(\alpha_n z^n+\overline{\beta_nz^n})~(d>1).
$$

Now, we show that $f(z)$ is either a constant function or an affine mapping. To do this, we choose
$F(z)=c|z|^{2(q-1)}$, where $|c|=1$  and see that
\begin{eqnarray*}
f\circ F(z)
%&=&\sum_{n=0}^{d}c^n\alpha_n|z|^{2n(q-1)}+\overline{\sum_{n=0}^{d}c^n\beta_n|z|^{2n(q-1)}}\\
&=&\sum_{n=0}^{d}(c^n\alpha_n+\overline{c}^n\overline{\beta}_n)|z|^{2n(q-1)}\in\mathbb{H}_l.
\end{eqnarray*}
By the representation \eqref{LS1-lem1}, we conclude that
$$c^n\alpha_n+\overline{c}^n\overline{\beta}_n=0
$$
for any $n>\frac{l-1}{q-1}\geq[\frac{l-1}{q-1}] = t$ and $c$ with $|c|=1$. This gives $\alpha_n=\beta_n=0$ for $n>t$.
Next we divide the proof into three cases.

(i) If $t=0$,  then $l<q$ and thus, $f(z)=\alpha_0+\overline{\beta_0}$ which is a constant. Obviously, $f\circ F(z)$ reduces to a constant and hence,
belongs to $\in\mathbb{H}_l$ for any $F(z)\in\mathbb{H}_q^*$.

(ii) If $t=1$, then $q\leq l\leq 2q-2$ and $f(z)=\alpha_0+\alpha_1z+\overline{\beta_0+\beta_1z}$. Obviously,
$f\circ F(z)\in\mathbb{H}_q\subset\mathbb{H}_l$ for any $F(z)\in\mathbb{H}_q^*$.

(iii) If $t\geq2$, then $l>2q-2$. We claim that $\alpha_n=\beta_n=0$ for $2\leq n\leq t$. Again, let
$$F(z)=c|z|^{2(q-1)}(z^{2m}+\overline{z}^{2m} )~ (m\in\mathbb{Z},~m>l,~|c|=1).
$$
Then, again by \eqref{LS1-lem1}, we have
\begin{eqnarray*}
f\circ F(z)
%&=& \sum_{n=0}^{t}\alpha_n\big(c|z|^{2(q-1)}(z^{2m}+\overline{z}^{2m})\big)^n
%+\overline{\sum_{n=0}^{t}\beta_n\big(c|z|^{2(q-1)}(z^{2m}+\overline{z}^{2m})\big)^n}\\
%&=&
%\alpha_0+\alpha_1\big(c|z|^{2(q-1)}(z^{2m}+\overline{z}^{2m})\big)+\sum_{n=2}^{t}c^n\alpha_n|z|^{2n(q-1)}(z^{2m}+\overline{z}^{2m})^n\\
%& &+
%\overline{\beta_0}+\overline{\beta_1}\big(\overline{c}|z|^{2(q-1)}(z^{2m}+\overline{z}^{2m})\big)
%+\sum_{n=2}^{t}\overline{c}^n\overline{\beta_n}|z|^{2n(q-1)}(z^{2m}+\overline{z}^{2m})^n\\
&=&\sum_{n=0}^1(c^n\alpha_n+\overline{c}^n\overline{\beta}_n)|z|^{2n(q-1)}(z^{2m}+\overline{z}^{2m})^n\\
&& \qquad +\sum_{n=2}^{t}(c^n\alpha_n+\overline{c}^n\overline{\beta}_n)|z|^{2n(q-1)}(z^{2m}+\overline{z}^{2m})^n\\
&=& H(z)+\sum_{n=2}^{t}(c^n\alpha_n+\overline{c}^n\overline{\beta}_n)H_n(z),
\end{eqnarray*}
where
$$H(z)=\sum_{n=0}^1(c^n\alpha_n+\overline{c}^n\overline{\beta}_n)\,|z|^{2n(q-1)}(z^{2m}+\overline{z}^{2m})^n
%$$
~\mbox{ and }~ H_n(z)=|z|^{2n(q-1)}(z^{2m}+\overline{z}^{2m})^n.
$$
Clearly, $H(z)\in\mathbb{H}_{q}$. However,  if $n\geq2$ is an even number, then one writes
\begin{eqnarray*}
H_n(z)&=&|z|^{2n(q-1)}\sum_{k=0}^nC_n^kz^{2m(n-k)}\overline{z}^{2mk}\\
&=&|z|^{2n(q-1)}\Big(C_n^{\frac{n}{2}}|z|^{2nm}+\sum_{k=0}^{\frac{n}{2}-1}(C_n^kz^{2m(n-2k)}|z|^{4mk}
+C_n^{\frac{n}{2}+k+1}\overline{z}^{4m(k+1)}|z|^{2m(n-2-2k)})\Big),
\end{eqnarray*}
and thus,  it follows from \eqref{LS1-lem1} that $H_n(z)\in\mathbb{H}_{n(m+q-1)+1}^*$ and $H_n(z)$ can be expressed as
$$H_n(z)=C_n^{\frac{n}{2}}|z|^{2n(m+q-1)}+\widetilde{H_n}(z),
$$
%where $\widetilde{H_n}(z)\in \mathbb{H}_{n(m+q-1)}^*$.
where $\widetilde{H_n}(z)\in \mathbb{H}_{n(m+q-1)}$.
 If $n\geq2$ is an odd number, then it is easy to deduce that
$H_n(z)\in\mathbb{H}_{n(m+q-1)-m+1}^*$ and
$$H_n(z)=C_n^{\frac{n-1}{2}}(z^{2m}+\overline{z}^{2m})|z|^{2(n(m+q-1)-m)}+\widehat{H_n}(z),
$$
%where $\widehat{H_n}(z)\in \mathbb{H}_{n(m+q-1)-m}^*$.
where $\widehat{H_n}(z)\in \mathbb{H}_{n(m+q-1)-m}$.
Note that both $n(m+q-1)+1$ and $n(m+q-1)-m+1$ are strictly monotonically increasing as $n$ from 2 to $t$.  Moreover, there are
no integers $n_1,~n_2\in[2,t]$ such that
$n_1(m+q-1)+1=n_2(m+q-1)-m+1$. Therefore, if $t$ is an even number, then $f\circ F$ has the following form
$$f\circ F(z)=C_t^{\frac{t}{2}}(c^t\alpha_t+\overline{c}^t\overline{\beta}_t)|z|^{2t(m+q-1)}+L_t(z),
$$
where $L_t(z)\in\mathbb{H}_{t(m+q+1)}$. Since $f\circ F\in\mathbb{H}_l$ and $t(m+q-1)+1>l+1$, by \eqref{LS1-lem1},
we get $$c^t\alpha_t+\overline{c}^t\overline{\beta}_t=0,
$$
for any $c$ with modulus one. Thus, $\alpha_t=\beta_t=0$. If $t>2$, then $t-1$ is a odd number. Therefore, $f\circ F$ can be written as
$$f\circ F=C_{t-1}^{\frac{t-2}{2}}(c^{t-1}\alpha_{t-1}+\overline{c}^{t-1}\overline{\beta}_{t-1})(z^{2m}+\overline{z}^{2m})|z|^{2((t-1)(m+q-1)-m)}+L_{t-1}(z),
$$
where $L_{t-1}(z)\in\mathbb{H}_{(t-1)(m+q-1)-m}$. Since $f\circ F\in\mathbb{H}_l$ and  $(t-1)(m+q+1)-m>l+1$,
by \eqref{LS1-lem1}, we have that
$$c^{t-1}\alpha_{t-1}+\overline{c}^{t-1}\overline{\beta}_{t-1}=0,
$$
for any $c$ with modulus one. Thus, $\alpha_{t-1}=\beta_{t-1}=0$. If $t=3$, the proof is finished. If $t>3$,
we can similarly obtain that
$$\alpha_{t-2}=\beta_{t-2}=\cdots =\alpha_2=\beta_2=0,
$$
since $n(m+q-1)+1$ and $n(m+q-1)-m+1$ are greater than $l+1$ for any $n\in\{2,\ldots,t\}$.

If $t$ is an odd number, then, by a similar analysis in the even case, we get that
$$\alpha_t=\beta_t=\cdots =\alpha_2=\beta_2=0.
$$
In other words, if $f\circ F\in\mathbb{H}_l$ for any $F\in\mathbb{H}_q^*~(q\geq2)$, then
$f(z)=\sum_{n=0}^{t}\alpha_n z^n+\overline{\sum_{n=0}^{t}\beta_n z^n}$, where $t\leq\min\{1,[\frac{l-1}{q-1}]\}$.
The proof of Theorem \ref{LS1-th1} is finished. \hfill $\Box$

%\textbf{Remark} The Theorem 3 shows that for any positive integers $l$ and $q(\geq2)$, $$\mathbb{H}_{0,q}^l=\{f|~f(z)=\sum_{n=0}^t\alpha_nz^n,~t=[\frac{l-1}{q-1}]\}.$$

\subsection{The proof of Theorem \ref{LS1-th2}}
  The statement (a) is equivalent to proving
$$\mathbb{H}_{1,0}^l\cap\mathbb{H}_{1,1}^l=\{f:\, \mbox{$f$ is  either analytic  or  anti-analytic}\}.
$$
By Proposition \ref{LS1-prop1}, we have $\mathbb{H}_{1,1}^l=\mathbb{H}_{1,0}^l$ and thus, it suffices to prove
$$\mathbb{H}_{1,0}^l=\{f:\, \mbox{$f$ is  either analytic or anti-analytic}\}.
$$
Let $f\in\mathbb{H}_{1,0}^l$. Then, $f(z)$ is harmonic and  thus,  has the form $f(z)=h(z)+\overline{g(z)}$,
where $h(z)$ and $g(z)$ are analytic. Set
$$F(z)=(e^z)^m~(m\in\mathbb{Z},m>l).
$$
Then we get that
$$F\circ f(z)=(e^{h(z)}\overline{e^{g(z)}})^m =H^m(z)\overline{G^m(z)},
$$
where $H(z)=e^{h(z)}$ and $G(z)=e^{g(z)}$ are analytic. Since $F\circ f \in\mathbb{H}_l$, we see that
$$\Delta^l(F\circ f(z))=\Delta^l(H^m(z)\overline{G^m(z)})
=4^l\frac{\partial^lH^{m}(z)}{\partial z^l}\frac{\partial^l\overline{G^{m}(z)}}{\partial \overline{z}^l}=0,
$$
which yields
$$\mbox{ either }~\frac{\partial^lH^{m}(z)}{\partial z^l}=0~\mbox{ or } ~\frac{\partial^lG^{m}(z)}{\partial z^l}=0.
$$
Thus, either $H^m(z)$ or $G^m(z)$ is a polynomial with degree not more than $l-1$. Note that $m>l$. Consequently, either
$H(z)$ or $G(z)$ is a constant which in turn implies that either $h(z)$ or $g(z)$ is a constant. Therefore, $f(z)$ is analytic or anti-analytic.
Obviously, for any harmonic $F$ and any analytic or anti-analytic $f$, $F\circ f$ is harmonic which is also $l$-harmonic.

(b) The statement (b) is equivalent to showing that
$$\mathbb{H}_{1,q}^l=\left \{f:\, f=\sum_{n=0}^{t}\alpha_n z^n ~\mbox{ or }~ f(z)=\overline{\sum_{n=0}^{t}\beta_nz^n}\right \},
$$
where $q\geq 2$ and $t\leq[\frac{l-1}{q-1}]$. By Proposition \ref{LS1-prop1}, we have $\mathbb{H}_{1,q}^l\subseteq\mathbb{H}_{1,0}^l$.
Thus, if $f\in\mathbb{H}_{1,q}^l$,  then either $f$ is analytic or anti-analytic.

Since $F\circ\overline{f}=\overline{\overline{F}\circ f}$, we only need to consider the case that $f$ is analytic.
Let $F(z)=|z|^{2(q-1)}(\in\mathbb{H}_q^*)$. Then we have that
$$H(z)= F\circ f(z)=f^{q-1}(z)\overline{f}^{q-1}(z).
$$
If $H\in\mathbb{H}_l$, then we have
$$\Delta^lH(z)
%=\Delta^l(f^{q-1}(z)\overline{f}^{q-1}(z))
=4^l\frac{\partial^lf^{q-1}(z)}{\partial z^l}\frac{\partial^l\overline{f}^{q-1}(z)}{\partial \overline{z}^l}=0,
$$
which yields $\frac{\partial^lf^{q-1}(z)}{\partial z^l}=0$. Therefore, $f^{q-1}(z)=\sum_{n=0}^{l-1}\beta_nz^n$ which implies
that $f$ must be a polynomial of degree not more than $[\frac{l-1}{q-1}]$ and thus,
we write $f(z)=\sum_{n=0}^{t}\alpha_n z^n~(t\leq[\frac{l-1}{q-1}])$. For any $F\in\mathbb{H}_q^*~(q\geq2)$,
$F$ has the representation $$F(z)=\sum_{k=1}^q|z|^{2(k-1)}G_k(z),
$$
where each $G_k(z)$ is harmonic and $G_q(z)\neq0$. By computation, we may then write
\begin{eqnarray*}
F\circ f(z)&= &\sum_{k=1}^q\Big(\sum_{n=0}^{t}\alpha_n z^n\Big)^{k-1}\overline{\Big(\sum_{n=0}^{t}\alpha_n z^n\Big)^{k-1}}G_k(f(z))\\
&=&\sum_{k=1}^q\Big(\sum_{n=0}^{t(k-1)}c_n z^n\Big)\Big(\sum_{n=0}^{t(k-1)}\overline{c_n}~\overline{z}^n\Big)G_k(f(z))
~\mbox{ for some $c_n$'s}\\
&=&\sum_{k=1}^q\Big(\sum_{0\leq i,j\leq t(k-1)}C_{ij}z^i\overline{z}^jG_k(f(z))\Big),
\end{eqnarray*}
where each $C_{i,j}$ is a complex number. It is easy to deduce that $z^i\overline{z}^jG_k(f(z))$ is $\max\{i+1,j+1\}$-harmonic.
Since $i,j\leq t(q-1)$, we see that $F\circ f\in\mathbb{H}_{t(q-1)+1}$. As $t(q-1)+1\leq l$, it is obvious that
$F\circ f\in\mathbb{H}_{l}$. The proof is complete. \hfill $\Box$

\subsection{The proof of Theorem \ref{LS1-th3}}
The sufficiency parts of the statements in (a)-(c) are obvious and therefore, we need to prove only the necessary part of the theorem.

(a)  The statement (a) is equivalent to proving
$$\mathbb{H}_{p,0}^1\cap\mathbb{H}_{p,1}^1=\mathbb{H}_{p,0}^2\cap\mathbb{H}_{p,1}^2=\{f:\, \mbox{$f$ is  either analytic or anti-analytic}\},
$$
where $p\geq2$. By Proposition \ref{LS1-prop1} and Theorem \ref{LS1-th2}, we acquire
$$\mathbb{H}_{p,1}^1=\mathbb{H}_{p,0}^1=\mathbb{H}_{1,0}^1=\{f:\, \mbox{$f$ is  either analytic or anti-analytic}\}.
$$
By Propositions \ref{LS1-prop1} and \ref{LS1-prop2}, we get
$$\mathbb{H}_{p,1}^2=\mathbb{H}_{p,0}^2=\mathbb{H}_{2,0}^2=\{f:\, \mbox{$f$ is  either analytic or anti-analytic}\}.
$$

(b) By Proposition \ref{LS1-prop1}, we have $\mathbb{H}_{p,q}^1=\mathbb{H}_{1,q}^1$. Therefore, the statement is an
immediate consequence of Theorem \ref{LS1-th2}.

(c) By Proposition \ref{LS1-prop1}, we obtain that $\mathbb{H}_{p,q}^2=\mathbb{H}_{2,q}^2\subseteq\mathbb{H}_{2,0}^2$.
By Proposition \ref{LS1-prop2}, it follows that $\mathbb{H}_{p,q}^2\subseteq\mathbb{H}_{1,q}^2$ and the desired conclusion
follows.  \hfill $\Box$

\subsection*{Acknowledgments}    The research was supported by the NSFs of China~(No.11371363), the construct program of the key discipline in Hunan province and the Hunan Provincial Natural Science Foundation of China (No.2015JJ6011).
%The authors would like
%to thank   the referees for their valuable comments for improving this
%paper.


\begin{thebibliography}{30}

\bibitem{am} Z. Abdulhadi and Y. Abu Muhanna,
\textit{Landau's theorem for biharmonic mappings},
J. Math. Anal. Appl., \textbf{338} (2008), 705--709.

\bibitem{amk1} Z. Abdulhadi, Y. Abu Muhanna and S. Khuri,
\textit{On univalent solutions of the biharmonic equation},
J. Inequal. Appl., \textbf{5} (2005), 469--478.

\bibitem{amk2} Z. Abdulhadi, Y. Abu Muhanna and S. Khoury,
\textit{On some properties of solutions of the biharmonic equation},
Appl. Math. Comput., \textbf{177} (2006), 346--351.

\bibitem{ACL} N. Aronszajn, T.M. Creese and L.J. Lipkin, Polyharmonic functions, Clarendon
Press, Oxford, 1983.

\bibitem{BH} A. Borichev and H. Hedenmalm,
\textit{Weighted integrability of polyharmonic functions},
\textrm{Adv. Math.}, {\bf 264}(2014), 464--505.

\bibitem{cpw1} SH. Chen, S. Ponnusamy and X. Wang,
\textit{Landau's theorem for certain biharmonic mappings},
Appl. Math. Comput., \textbf{208} (2009), 427--433.

\bibitem{cpw2} SH. Chen, S. Ponnusamy and X. Wang,
\textit{Compostions of harmonic mappings and biharmonic mappings},
Bull. Belg. Math. Soc. Simon Stein, \textbf{17}(2010), 693--704.

\bibitem{cpw3} SH. Chen, S. Ponnusamy and X. Wang,
\textit{Bloch and Landau's theorems for planar $p$-harmonic mappings},
J. Math. Anal. Appl., \textbf{373} (2011), 102--110.

\bibitem{cpw4} SH. Chen, S. Ponnusamy and X. Wang,
\textit{On some properties of solutions of the $p$-harmonic equations},
Filomat, \textbf{27} (2013), 577--591.

\bibitem{crw1} J. Chen, A. Rasila and X. Wang,
\textit{On polyharmonic univalent mappings},
Math. Rep., \textbf{15} (2013), 343--357.

\bibitem{crw2} J. Chen, A. Rasila and X. Wang,
\textit{Starlikeness and convexity of polyharmonic mappings},
Bull. Belg. Math. Soc. Simon Stevin., \textbf{21} (2014), 67--82.

\bibitem{crw3} J. Chen, A. Rasila and X. Wang,
\textit{Landau's theorem for polyharmonic mappings},
J. Math. Anal. Appl., \textbf{409} (2014), 934--945.

\bibitem{cw} J. Chen and X. Wang,
\textit{On certain classes of biharmonic mappings defined by convolution},
Abstr. Appl. Anal., 2012, Article ID 379130, 10 pages. doi:10.1155/2012/379130.

\bibitem{CS} J. G. Clunie and T. Sheil-Small,
\textit{Harmonic univalent functions},
\textrm{Ann. Acad. Sci. Fenn. Ser. A. I.} \textbf{9} (1984), 3--25.

\bibitem{dur} P. Duren, Harmonic mappings in the plane, Cambridge University Press,
Cambridge, 2004.

\bibitem{hb} J. Happel and H. Brenner, Low Reynolds Number Hydrodynamics, Princeton-Hall, 1965.

\bibitem{khu1} S. A. Khuri,
\textit{Biorthogonal series solution of Stokes flow problems in sectorial regions},
SIAM J. Appl. Math., \textbf{56} (1996), 19--39.

\bibitem{khu2} S. A. Khuri,
\textit{On the properties of a class of polyharmonic functions},
Math. Model.  Anal.,  \textbf{18} (2013), 219--235.

\bibitem{lan} W. E. Langlois, Slow Viscous Flow, Macmillan Company, 1964.

\bibitem{lkw} P. Li, S. A. Khuri, X. Wang,
\textit{On certain geometric properties of polyharmonic mappings},
J. Math. Anal. Appl.,  \textbf{434} (2016), 1462--1473.

\bibitem{lpw} P. Li, S. Ponnusamy and X. Wang,
\textit{Some properties of planar $p$-harmonic and $\log$-$p$-harmonic mappings},
Bull. Malaysian Math. Sciences Soc., \textbf{36} (2013), 595--609.


%\bibitem{liu} M. Liu, Landau's theorem for biharmonic mappings, Complex Var. Ellitic Equ. 53(2008), 843-855.

\bibitem{qw1} J. Qiao and X. Wang,
\textit{Subordination of $p$-harmonic mappings},
Bull. Belg. Math. Soc. Simon Stevin, \textbf{19} (2012), 47--61.

\bibitem{qw2} J. Qiao and X. Wang,
\textit{On $p$-harmonic univalent mappings (in Chinese)},
Acta Math. Sci., \textbf{32} (2012), 588--600.

\bibitem{rei} E. Reich,
\textit{The composition of harmonic mappings},
Ann. Acad. Sci. Fenn. Ser. A. I, \textbf{12} (1987), 47--53.
\end{thebibliography}
\end{document}